\newtheorem{thm}{Theorem}[section]
\newtheorem{cor}[thm]{Corollary}
\newtheorem{lem}[thm]{Lemma}
\newtheorem{prop}[thm]{Proposition}
\theoremstyle{definition}
\newtheorem{defn}[thm]{Definition}
\newtheorem{ex}[thm]{Example}
\theoremstyle{remark}
\newtheorem{prob}[thm]{Problem}
\numberwithin{equation}{section}
\DeclareMathOperator{\N}{\mathbb{N}}
\DeclareMathOperator{\Z}{\mathbb{Z}}
\def\Q{\mathbb Q}
\begin{document}

\author[P.V. Danchev]{Peter V. Danchev}
\address{Institute of Mathematics and Informatics, Bulgarian Academy of Sciences \\ "Acad. G. Bonchev" str., bl. 8, 1113 Sofia, Bulgaria.}
\email{danchev@math.bas.bg; pvdanchev@yahoo.com}
\author[P.W. Keef]{Patrick W. Keef}
\address{Department of Mathematics, Whitman College, 345 Boyer Avenue, Walla Walla, WA, 99362, United States of America.}
\email{keef@whitman.edu}

\title[On Some Versions of Hopficity for Abelian Groups]{On Some Versions of Hopficity for \\ Abelian Groups}
\keywords{Abelian groups, Hopfian groups, co-Hopfian groups, co-finitely Hopfian groups}
\subjclass[2010]{20K10, 20K20, 20K21, 20K30}

\maketitle

\begin{abstract} We completely describe in certain important cases the class of commutative {\it co-finitely Hopfian} groups as defined by Bridson-Groves-Hillman-Martin in the journal Groups, Geometry, and Dynamics on 2010 (see \cite{BGHM}). We also consider and give a satisfactory description of several related classes of commutative groups.

We also discuss in the commutative case a slightly more general version of co-finitely Hopfian groups called {\it almost co-finitely Hopfian} groups, as well as a more general version of Hopfian groups called {\it almost finitely Hopfian} groups.
\end{abstract}

\section{Introduction and Fundamentals}

As is well-known, an arbitrary ({\it not} necessarily commutative) group is said to be {\it Hopfian} if every surjective endomorphism (i.e., epimorphism) is an automorphism. Dually, a group is said to be {\it co-Hopfian} if every injective endomorphism (i.e., monomorphism) is an automorphism. These properties arose quite naturally in Hopf's work on self-maps of surfaces (see \cite{H}).

Inspired by these two fundamental algebraic objects and some topological considerations which were about compact manifolds, and especially that the fundamental groups of compact manifolds are finitely presented, all of this focuses attention on such groups motivating Bridson et al. to define in \cite{BGHM} a group to be {\it co-finitely Hopfian} if every endomorphism whose image is of finite index is an automorphism, i.e., for any endomorphism $\phi: G\to G$ with finite $G/\phi(G)$, $\phi$ is an automorphism; that is, both ker$\phi=\{0\}$ and coker$\phi=\{0\}$. This condition is definitely stronger than the Hopfian property, as a non-trivial finite group will always be Hopfian, but never be co-finitely Hopfian. They proved some important results on these groups in the non-commutative case, one of which leads even to a rather more general property (see \cite[Theorem 3.1]{BGHM}).

Attempting to strengthen this basic concept, we call a group $G$ {\it co-finitely injective} provided that, for any endomorphism $\phi: G\to G$ with finite $G/\phi(G)$, $\phi$ is a monomorphism (that is, ker$\phi=\{0\}$). Again, any group that is co-finitely injective will be Hopfian, but the converse fails in general. Dually, we call a group $G$ {\it co-finitely surjective} provided that, for any endomorphism $\phi: G\to G$ with finite $G/\phi(G)$, $\phi$ is an epimorphism (that is, co-ker$\phi=\{0\}$). It is obvious that a group is co-finitely Hopfian if, and only if, it is both co-finitely injective and co-finitely surjective. In fact, for our purposes, it would have been better to refer to a ``co-finitely Hopfian group" as {\it co-finitely bijective}, but however for completeness of the exposition we will preserve the terminology of \cite{BGHM}.

From here on, the word ``group" will signify an additively written abelian group; notations used in the text are standard being mainly in agreement with \cite{F1,F2}. Everywhere in the text, the word ``summand" means ``direct summand".

We begin by showing that in studying these classes of groups there is little loss of generality in restricting to the case of reduced torsion-free groups. In the latter case, we completely describe these classes for groups of finite rank, as well as those that are completely decomposable or algebraically compact. We then show by a concrete example that these characterizations do {\bf not} necessarily extend to broader classes of groups, such as the so-called {\it Butler groups} of infinite rank, also known in the existing literature as {\it $B_2$-groups}.

Likewise, we characterize in some aspect the so-termed {\it almost co-finitely Hopfian groups} that slightly expand the defined above co-finitely Hopfian groups (see Proposition~\ref{torsion} through Corollary~\ref{Bassian}). Also, we classify in some way the so-named {\it almost finitely Hopfian groups} which generalize the classical Hopfian group (see Theorem~\ref{sep}). And finally, some of what we do {\it not} achieve in the present article is explicitly posed as two challenging queries (see Problems~\ref{8} and \ref{9}, respectively).

\section{Preliminaries}

We will frequently use the following observation, which is easy to verify and so stated without proof.

\begin{lem}\label{0.5}
Suppose $G$ is a group and $A$ is a summand of $G$. If $G$ is co-finitely injective, co-finitely surjective, or co-finitely Hopfian, then $A$ satisfies the same property.
\end{lem}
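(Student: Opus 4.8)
The plan is to prove the contrapositive-flavored statement directly: given that $G=A\oplus B$ and that $G$ has one of the three co-finite properties, show $A$ inherits it. The natural device is the pair of canonical maps associated with the summand, namely the inclusion $\iota\colon A\hookrightarrow G$ and the projection $\pi\colon G\to A$ with $\pi\circ\iota=\mathrm{id}_A$. The key idea is that any endomorphism $\psi\colon A\to A$ can be \emph{lifted} to an endomorphism $\phi\colon G\to G$ by setting $\phi=\iota\circ\psi\circ\pi$ extended by the identity on $B$; more precisely, I would define $\phi(a+b)=\psi(a)+b$ for $a\in A$, $b\in B$, which is clearly an endomorphism of $G$ that restricts to $\psi$ on $A$ and to the identity on $B$.

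The heart of the argument is the following dictionary between $\psi$ and $\phi$. First, the cokernel relation: since $\phi$ acts as the identity on the $B$-coordinate, one has $G/\phi(G)\cong A/\psi(A)$ canonically, so $G/\phi(G)$ is finite if and only if $A/\psi(A)$ is finite. Thus $\phi$ has finite-index image exactly when $\psi$ does, which lets us transport the hypothesis on $G$ to a conclusion about $\psi$. Second, the kernel relation: because the decomposition is direct, $\ker\phi=\ker\psi\oplus 0$, so $\phi$ is injective precisely when $\psi$ is injective. Similarly $\phi$ is surjective precisely when $\psi$ is surjective, and hence $\phi$ is an automorphism precisely when $\psi$ is.

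With this dictionary in hand, the three cases are immediate and parallel. Suppose $G$ is co-finitely injective and let $\psi\colon A\to A$ have finite $A/\psi(A)$. Then the lifted $\phi$ has finite $G/\phi(G)$, so by hypothesis $\phi$ is a monomorphism; by the kernel relation $\psi$ is a monomorphism, proving $A$ is co-finitely injective. The co-finitely surjective case uses the surjectivity relation in exactly the same way, and the co-finitely Hopfian case follows either by combining the two or by applying the automorphism relation directly.

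I do not expect any genuine obstacle here, which is consistent with the authors stating the lemma as ``easy to verify'' and omitting the proof. The only point requiring a moment's care is verifying that $\phi$ really does have finite-index image whenever $\psi$ does (and conversely), i.e.\ confirming the isomorphism $G/\phi(G)\cong A/\psi(A)$; this is where one must use that $B$ is mapped isomorphically and that the sum is direct, so no index is lost or gained in the $B$-coordinate. Everything else is a routine bookkeeping exercise with the projection and inclusion maps.
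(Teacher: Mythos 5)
Your proof is correct and is exactly the standard verification the authors had in mind when they declared the lemma ``easy to verify'' and omitted the proof: extend $\psi$ on $A$ to $\phi=\psi\oplus\mathrm{id}_B$ on $G=A\oplus B$, and use $\ker\phi\cong\ker\psi$, $G/\phi(G)\cong A/\psi(A)$, and the matching surjectivity statement. All three cases then follow at once, as you say; nothing is missing.
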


The following examples can be easily verified by considering the endomorphism given by multiplication by $p$, so we leave it with no proof.

\begin{ex}\label{0.4}
If $p$ is a prime and $n\in N$, then $\Z(p^n)$ is neither co-finitely injective nor co-finitely surjective. On the other hand, $\Z(p^\infty)$ is co-finitely surjective, but {\it not} co-finitely injective.
\end{ex}

Since a group is torsion-free if, and only if, it does not have a summand isomorphic to one of the groups mentioned above, the following statement is an immediate consequence.

\begin{cor}\label{0.25}
Suppose $G$ is a group.

(a) If $G$ is co-finitely injective, then $G$ is torsion-free.

(b) If $G$ is co-finitely surjective, then the reduced part of $G$ is torsion-free.
\end{cor}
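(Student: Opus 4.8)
The plan is to derive both parts by contradiction from the structural dichotomy quoted just above the statement, combined with Lemma~\ref{0.5} and Example~\ref{0.4}. The only external facts I need are: a group fails to be torsion-free precisely when it admits a summand isomorphic to some $\Z(p^n)$ or $\Z(p^\infty)$; and the maximal divisible subgroup $D$ of any group $G$ splits off, giving a decomposition $G=D\oplus R$ with $R$ reduced and a summand of $G$.

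For part (a), suppose toward a contradiction that the co-finitely injective group $G$ is not torsion-free. Then by the quoted dichotomy $G$ has a summand $A$ isomorphic to either $\Z(p^n)$ or $\Z(p^\infty)$ for some prime $p$. Lemma~\ref{0.5} forces $A$ to inherit co-finite injectivity, but Example~\ref{0.4} records that neither $\Z(p^n)$ nor $\Z(p^\infty)$ is co-finitely injective --- a contradiction. Hence $G$ is torsion-free.

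For part (b), write $G=D\oplus R$ with $D$ divisible and $R$ reduced. Since $R$ is a summand, Lemma~\ref{0.5} shows $R$ is co-finitely surjective. If $R$ were not torsion-free, the dichotomy would again produce a cocyclic summand of $R$; but $R$ being reduced excludes the divisible groups $\Z(p^\infty)$, so the only possibility is a summand isomorphic to some $\Z(p^n)$. Applying Lemma~\ref{0.5} once more and invoking Example~\ref{0.4}, which says $\Z(p^n)$ is not co-finitely surjective, yields the desired contradiction, so $R$ is torsion-free.

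The one step that requires care --- and the reason (b) speaks of the reduced part rather than of $G$ itself --- is precisely the exclusion of the $\Z(p^\infty)$ case: Example~\ref{0.4} tells us $\Z(p^\infty)$ \emph{is} co-finitely surjective, so a divisible torsion summand would yield no contradiction. Restricting attention to the reduced summand $R$ removes this obstruction and isolates the one divisible-free cocyclic type $\Z(p^n)$ that co-finite surjectivity does forbid.
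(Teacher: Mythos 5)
Your proof is correct and follows exactly the route the paper intends: the paper states the corollary as an immediate consequence of Example~\ref{0.4}, Lemma~\ref{0.5}, and the fact that non-torsion-free groups have a summand isomorphic to $\Z(p^n)$ or $\Z(p^\infty)$, and you have simply written out that argument, including the decomposition $G=D\oplus R$ needed for part (b). Your closing remark correctly identifies why (b) concerns only the reduced part, namely that $\Z(p^\infty)$ \emph{is} co-finitely surjective, which is precisely the asymmetry recorded in Example~\ref{0.4}.
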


We begin by characterizing these classes in the very tractable case of divisible groups.

\begin{prop}\label{1.9} Suppose $D$ is a divisible group.

(a) $D$ is co-finitely injective if and only if it is torsion-free of finite rank;

(b) $D$ is always co-finitely surjective.
\end{prop}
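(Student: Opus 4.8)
The plan is to use the structure theorem for divisible groups. Recall that every divisible group $D$ decomposes as
\[
D \cong \left(\bigoplus_{i\in I}\Q\right)\oplus\left(\bigoplus_{p}\bigoplus_{j\in J_p}\Z(p^\infty)\right),
\]
where the first summand is the torsion-free part and the second is the torsion (primary) part. Part (a) and part (b) should both follow by combining this decomposition with the earlier observations in Example~\ref{0.4} and Corollary~\ref{0.25}, together with a direct analysis of the torsion-free divisible case.

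For part (a), one direction is already essentially available: if $D$ is co-finitely injective, then by Corollary~\ref{0.25}(a) it is torsion-free, so $D\cong\bigoplus_{i\in I}\Q$. It remains to show $I$ must be finite. I would argue the contrapstive: if the rank $|I|$ is infinite, I would construct an injective endomorphism $\phi$ with finite (indeed trivial) cokernel that fails to be injective---or more carefully, since on a $\Q$-vector space an endomorphism with finite cokernel is automatically surjective and hence injective, the cofinite condition becomes vacuous unless I handle it correctly. The cleaner route is: on a $\Q$-vector space $V=\bigoplus_{i\in I}\Q$, an endomorphism $\phi$ has finite index image if and only if $\phi$ is surjective (a proper subspace has infinite index), and over a field a surjective linear endomorphism of an infinite-dimensional space need not be injective. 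So for infinite $I$ I would exhibit a surjective-but-not-injective $\Q$-linear map (a shift), giving a finite-cokernel endomorphism that is not a monomorphism; for finite $I$, surjectivity of a linear map on a finite-dimensional space forces injectivity, so $D$ is co-finitely injective. The converse---finite rank torsion-free divisible implies co-finitely injective---is the finite-dimensional rank-nullity argument.

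For part (b), the claim is that \emph{every} divisible group is co-finitely surjective, so I must show that any endomorphism $\phi:D\to D$ with $D/\phi(D)$ finite is actually onto. The key fact is that the image $\phi(D)$ is again divisible, being a homomorphic image of a divisible group; hence $\phi(D)$ is a summand of $D$, and $D = \phi(D)\oplus C$ for some complement $C\cong D/\phi(D)$, which is finite. But a finite group is divisible only if it is trivial (a nontrivial finite group is not divisible), so $C=\{0\}$ and $\phi(D)=D$. Thus $\phi$ is epic, and $D$ is co-finitely surjective regardless of rank or torsion.

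The main obstacle I anticipate is in part (a): the subtlety that on a $\Q$-vector space the ``finite cokernel'' hypothesis is equivalent to surjectivity, so I must be careful not to conflate the co-finitely injective condition with genuine Hopficity. The construction of the explicit surjective non-injective shift map on $\bigoplus_{i\in I}\Q$ for infinite $I$ is the crux; everything else reduces to standard facts about divisibility of images and the rank-nullity theorem in finite dimensions. I expect the argument for part (b) to be short and essentially free of obstacles once the divisibility of the image is invoked.
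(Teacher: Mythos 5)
Your proposal is correct and takes essentially the same approach as the paper: in part (a), your surjective-but-not-injective shift map on $\bigoplus_{i\in I}\Q$ for infinite $I$ is exactly the paper's observation that $D\cong \Q\oplus D$ in infinite rank, and your finite-rank rank-nullity argument matches the paper's sufficiency proof. In part (b), your decomposition $D=\phi(D)\oplus C$ is a minor detour, but it rests on the identical key fact the paper applies directly to the quotient: a finite divisible group, here $D/\phi(D)$, must be trivial.
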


\begin{proof}
Regarding (a), consider necessity. By Corollary~\ref{0.25}, $D$ must be torsion-free. Next, if $D$ had infinite rank, then $D\cong \Q\oplus D$, so that $D$ is not co-finitely injective, as required.

Turning to sufficiency, suppose $D\cong \Q^n$ and $\phi:D\to D$ is an endomorphism with $D/\phi(D)$ finite. It follows that $\phi(D)$ is also torsion-free divisible of rank $n$, so that $\phi$ must be an isomorphism (note we have actually shown that such a $D$ is co-finitely Hopfian).

As to (b), suppose again that $\phi:D\to D$ is an endomorphism with $D/\phi(D)$ finite. It follows that $D/\phi(D)$ is a finite divisible group, i.e., it must be $\{0\}$. Therefore, $D=\phi(D)$, so that $\phi$ is surjective, as needed.
\end{proof}

We now discuss what happens when we break a group down into a sum of a divisible group and a reduced group.

\begin{prop}\label{1.95} Suppose $G$ is a group with $G=R\oplus D$, where $R$ is reduced and $D$ is divisible.

(a) $G$ is co-finitely injective if and only if $R$ is co-finitely injective and $D\cong \Q^n$ for some $n<\omega$.

(b) $G$ is co-finitely surjective if and only if $R$ is co-finitely surjective.
\end{prop}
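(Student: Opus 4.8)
The plan is to treat both directions by exploiting that the decomposition $G=R\oplus D$ realizes $D$ as the divisible part of $G$: since $R$ is reduced, $D$ is the maximal divisible subgroup of $G$ and is therefore fully invariant. Consequently any endomorphism $\phi\colon G\to G$ satisfies $\phi(D)\subseteq D$, and hence induces both a restriction $\phi|_D\colon D\to D$ and a quotient map $\bar\phi\colon G/D\to G/D$, where $G/D\cong R$. The necessity in each part is then immediate: $R$ and $D$ are summands, so by Lemma~\ref{0.5} they inherit whichever property $G$ has, and for $D$ part~(a) of Proposition~\ref{1.9} forces $D$ to be torsion-free of finite rank, i.e. $D\cong\Q^n$. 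The substance is the sufficiency, and for this I would run the short exact sequence $0\to D\to G\to G/D\to 0$ together with the two induced maps.

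For sufficiency in (a), assume $R$ is co-finitely injective and $D\cong\Q^n$, and let $\phi$ have finite cokernel $G/\phi(G)$. First I would observe that $\mathrm{coker}\,\bar\phi=G/(\phi(G)+D)$ is a quotient of $G/\phi(G)$, hence finite; since $G/D\cong R$ is co-finitely injective, $\bar\phi$ is therefore injective. Injectivity of $\bar\phi$ says precisely that $\phi^{-1}(D)=D$, which yields two things at once: $\ker\phi\subseteq D$, and $D\cap\phi(G)=\phi(D)$ (if $\phi(g)\in D$ then $g\in D$). The latter equality gives an embedding $D/\phi(D)\hookrightarrow G/\phi(G)$, so $D/\phi(D)$ is finite; as $D\cong\Q^n$ is co-finitely injective by Proposition~\ref{1.9}(a), $\phi|_D$ is injective. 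Combining $\ker\phi\subseteq D$ with $\ker\phi\cap D=\ker(\phi|_D)=\{0\}$ gives $\ker\phi=\{0\}$, as needed.

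For sufficiency in (b), assume $R$ is co-finitely surjective and again take $\phi$ with $G/\phi(G)$ finite. The same reduction shows $\bar\phi$ has finite cokernel, so now co-finite surjectivity of $G/D\cong R$ makes $\bar\phi$ surjective, i.e. $G=\phi(G)+D$. Then $G/\phi(G)\cong D/(D\cap\phi(G))$ is simultaneously finite and divisible (being a quotient of the divisible group $D$), hence trivial; therefore $D\subseteq\phi(G)$ and $\phi(G)=\phi(G)+D=G$.

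The one genuinely delicate point, and the place where the argument could go wrong if one is careless, is the interaction between the $R$ part and the $D$ part under $\phi$: because $\phi$ may carry elements of $R$ into $D$, the image $\phi(D)$ can be strictly smaller than $D\cap\phi(G)$, and likewise $\ker\phi$ need not a priori lie in $D$. The full invariance of $D$ and the injectivity (resp. surjectivity) of the induced map $\bar\phi$ are exactly what tame this: in (a) injectivity of $\bar\phi$ forces both $D\cap\phi(G)=\phi(D)$ and $\ker\phi\subseteq D$, while in (b) it is the observation that a finite divisible group vanishes that closes the gap. Equivalently, one may package all of this bookkeeping through the snake lemma applied to the displayed short exact sequence.
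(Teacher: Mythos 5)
Your proof is correct and takes essentially the same route as the paper's: both exploit that $D$, being the divisible part of $G$ (since $R$ is reduced), is fully invariant, pass to the induced map $\bar\phi$ on $G/D\cong R$, deduce $\phi(G)\cap D=\phi(D)$ from injectivity of $\bar\phi$ in part (a), and in part (b) kill the finite divisible quotient $D/(\phi(G)\cap D)$ to get $D\subseteq\phi(G)$. The only cosmetic difference is that you phrase injectivity of $\bar\phi$ as $\phi^{-1}(D)=D$, which makes the inclusion $\ker\phi\subseteq D$ fully explicit where the paper leaves it implicit.
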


\begin{proof} For both parts, necessity follows directly from Lemma~\ref{0.5} and Proposition~\ref{1.9}.

Turning to sufficiency, let $\phi:G\to G$ be an endomorphism with $G/\phi(G)$ finite. In either (a) or (b), $\phi(D)\leq D$, so that $\phi$ induces a homomorphism $$\overline \phi:R\cong G/D\to G/D\cong R.$$ It is readily seen that $(G/D)/\overline \phi(G/D)$ is naturally an epimorphic image of $G/\phi(G)$, so that $\overline\phi(G/D)$ also has finite index in $G/D$.

Regarding (a), since $R$ is assumed to be co-finitely injective, it follows that $\overline \phi$ is injective. This readily forces that $\phi(G)\cap D=\phi(D)$. Therefore, $$D/\phi(D)=D/(\phi(G)\cap D)\leq G/\phi(G)$$ is also finite. But, since $D$ is co-finitely injective, we can conclude that $\phi$ is injective when restricted to $D$. This, and the injectivity of $\overline \phi$ yields that $\phi$ itself is injective.

Regarding (b), since $R$ is assumed to be co-finitely surjective, it follows that $\overline \phi$ is surjective. Again, since $D/(\phi(G)\cap D)$ embeds in  $G/\phi(G)$, we can infer that it is finite. But, since $D/(\phi(G)\cap D)$ is divisible, this can only hold if it is $\{0\}$; i.e., $D\leq \phi(G)$. This, combined with the surjectivity of $\overline \phi$, guarantee that $\phi$ itself is surjective.
\end{proof}

So, a co-finitely injective group, as well as a reduced co-finitely surjective group, will always be torsion free. This leads to the following assertion.

\begin{cor}\label{0.9}
The group $G$ is co-finitely Hopfian if and only $G\cong R\oplus \Q^n$, where $n<\omega$ and $R$ is reduced co-finitely Hopfian (so that $G$ must be torsion-free).
\end{cor}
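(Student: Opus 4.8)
The plan is to reduce everything to the two parts of Proposition~\ref{1.95} together with the definition of the co-finitely Hopfian property. First I would invoke the standard structure theorem that every group $G$ decomposes as $G=R\oplus D$, where $D$ is the (maximal) divisible subgroup of $G$ and $R$ is a reduced complement. This puts $G$ in precisely the form required to apply Proposition~\ref{1.95}, so no generality is lost.

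Next I would recall from the Introduction that a group is co-finitely Hopfian if and only if it is simultaneously co-finitely injective and co-finitely surjective. I would then apply both parts of Proposition~\ref{1.95} to the decomposition $G=R\oplus D$: part (a) gives that $G$ is co-finitely injective if and only if $R$ is co-finitely injective and $D\cong \Q^n$ for some $n<\omega$, while part (b) gives that $G$ is co-finitely surjective if and only if $R$ is co-finitely surjective. Conjoining these two equivalences, $G$ is co-finitely Hopfian exactly when $R$ is both co-finitely injective and co-finitely surjective (i.e., $R$ is co-finitely Hopfian) \emph{and} $D\cong\Q^n$; this yields $G\cong R\oplus\Q^n$ with $R$ reduced and co-finitely Hopfian, establishing both implications at once.

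Finally, for the parenthetical torsion-freeness claim, I would note that a co-finitely Hopfian $R$ is in particular co-finitely injective, so by Corollary~\ref{0.25}(a) it is torsion-free; since $\Q^n$ is evidently torsion-free, the direct sum $G\cong R\oplus\Q^n$ is torsion-free as well.

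I do not anticipate a genuine obstacle here: the statement is a formal corollary, and the only point requiring a moment's care is the legitimacy of writing $G$ in the reduced-plus-divisible form $R\oplus D$ so that Proposition~\ref{1.95} applies verbatim. Everything else is a mechanical combination of the equivalences already proved.
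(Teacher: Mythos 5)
Your proposal is correct and follows exactly the route the paper intends: the paper states Corollary~\ref{0.9} without proof as an immediate consequence of Proposition~\ref{1.95} together with the observation from the Introduction that co-finitely Hopfian is equivalent to co-finitely injective plus co-finitely surjective, which is precisely your argument. Your use of Corollary~\ref{0.25}(a) for the parenthetical torsion-freeness claim likewise matches the remark the paper makes just before the corollary.
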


Thus, in describing the groups in these three classes, there is no loss of generality in assuming that $G$ is torsion-free and reduced. We now begin a more detailed discussion of these conditions in the plainly handled case of torsion-free groups of finite rank by proving the following.

\begin{prop}\label{1} If $G$ is any torsion-free group of finite rank, then $G$ is co-finitely injective (and hence Hopfian).
\end{prop}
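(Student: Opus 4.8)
The plan is to reduce the assertion to a dimension count inside a finite-dimensional vector space over $\Q$ by passing to the divisible hull of $G$. Write $n$ for the (finite) rank of $G$ and let $V\cong\Q^n$ denote its divisible hull, so that $G$ embeds in $V$ as a full-rank subgroup and every endomorphism $\phi:G\to G$ extends uniquely to a $\Q$-linear transformation $\tilde\phi:V\to V$ satisfying $\tilde\phi|_G=\phi$. The point of this passage is that the only hypothesis we are given about $\phi$ is the finiteness of $G/\phi(G)$, and finiteness is a torsion phenomenon that should disappear once we tensor with $\Q$.

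The first step is to exploit exactly this. Since torsion-free rank is additive along the short exact sequence $0\to\phi(G)\to G\to G/\phi(G)\to 0$ and a finite group has rank $0$, the image $\phi(G)$ must again have rank $n$. Equivalently, tensoring this sequence with the flat $\Z$-module $\Q$ annihilates the torsion quotient $G/\phi(G)$ and yields $\tilde\phi(V)=V$; that is, $\tilde\phi$ is a surjective endomorphism of the finite-dimensional space $V$. The second step is then purely linear: a surjective $\Q$-linear endomorphism of $\Q^n$ is automatically injective, so $\tilde\phi$ is an isomorphism of $V$. Because $G$ is torsion-free it sits inside $V$, and $\phi$ is merely the restriction of the injective map $\tilde\phi$; hence $\ker\phi\subseteq\ker\tilde\phi=\{0\}$ and $\phi$ is a monomorphism. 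This shows that $G$ is co-finitely injective, and the parenthetical claim is immediate, since any co-finitely injective group is Hopfian: an epimorphism has trivial, hence finite, cokernel, so it is forced to be injective as well.

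I do not anticipate a genuine obstacle here, as the whole content is that finiteness of the cokernel becomes invisible after tensoring with $\Q$, leaving a full-rank — and therefore invertible — linear map. The only points requiring care are the standard facts that torsion-free rank is additive on short exact sequences (equivalently, that $\Q\otimes_\Z-$ is exact) and that a nonzero subgroup of a torsion-free group has positive rank, both of which are routine. It is worth emphasizing, finally, that finite rank secures only co-finite injectivity and not co-finite surjectivity, as multiplication by a prime on $\Z$ already illustrates; thus the present proposition is genuinely one-sided.
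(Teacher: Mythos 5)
Your proof is correct and is essentially the paper's argument: the paper's one-line proof is exactly the rank count (finite cokernel forces $\phi(G)$ to have full rank $n$, whence torsion-freeness kills the kernel), which you have simply made explicit by tensoring with $\Q$ and invoking invertibility of a surjective endomorphism of $\Q^n$. All your supporting steps (flatness of $\Q$, additivity of rank, the Hopfian parenthetical) are sound, so there is nothing to repair.
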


\begin{proof} Suppose $\phi:G\to G$ is an endomorphism. If $G/\phi(G)$ is finite, then one sees that both $\phi(G)$ and $G$ have the same rank, so the kernel of $\phi$ is exactly $\{0\}$, as wanted.
\end{proof}

On the other hand, we have the following useful criterion.

\begin{prop}\label{3} If $G$ is a torsion-free group of finite rank, then the following three assertions are equivalent:

(a) $G$ is co-finitely Hopfian;

(b) $G$ is co-finitely surjective;

(c) $G$ is co-Hopfian;

(d) $G$ is divisible.
\end{prop}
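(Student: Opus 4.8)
The plan is to split this four-way equivalence into the chain $(d)\Rightarrow(a)\Rightarrow(b)\Rightarrow(d)$, which settles the three ``co-finite'' conditions simultaneously, together with the separate equivalence $(c)\Leftrightarrow(d)$ relating co-Hopficity to divisibility. The single tool driving every nontrivial step is the multiplication-by-$p$ endomorphism $\mu_p:G\to G$, $g\mapsto pg$, for a prime $p$: since $G$ is torsion-free, $\mu_p$ is always injective, and its cokernel is exactly $G/pG$.

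First I would record the one structural fact that makes the co-finite conditions testable. Because $G$ has finite rank $n$, the quotient $G/pG$ is an elementary abelian $p$-group of rank at most $n$, hence finite, and it is nonzero precisely when $G$ fails to be $p$-divisible. Thus $\mu_p$ is an endomorphism with finite cokernel, so it is a legitimate witness for the co-finite properties. This is the crux of the argument and the only place where finite rank is genuinely used on the co-finite side.

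With this in hand the chain is short. For $(d)\Rightarrow(a)$, a divisible torsion-free group of finite rank is isomorphic to $\Q^n$, which was already shown to be co-finitely Hopfian in Proposition~\ref{1.9}. The implication $(a)\Rightarrow(b)$ is immediate from the definitions. For $(b)\Rightarrow(d)$ I would argue by contraposition: if $G$ is not divisible then, being torsion-free, it fails to be $p$-divisible for some prime $p$, so $\mu_p$ has finite nonzero cokernel $G/pG$ and is therefore not surjective, whence $G$ is not co-finitely surjective. One could equally invoke Proposition~\ref{1}, which guarantees that $G$ is automatically co-finitely injective, to see directly that $(a)$ and $(b)$ coincide.

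Finally I would dispatch $(c)\Leftrightarrow(d)$ using the same map, this time with no finiteness input at all. If $G$ is co-Hopfian but not divisible, then for a suitable prime $p$ the map $\mu_p$ is injective yet not surjective, contradicting co-Hopficity; hence $(c)\Rightarrow(d)$. Conversely, if $G$ is divisible of finite rank then $G\cong\Q^n$, and any injective endomorphism extends to an injective $\Q$-linear self-map of the finite-dimensional space $\Q^n$, which is automatically bijective, giving $(d)\Rightarrow(c)$. The main obstacle—indeed the only subtlety—is the finiteness of $G/pG$ in the second paragraph: it is exactly what lets $\mu_p$ certify the co-finite conditions, and it is what breaks down in infinite rank, where $\mu_p$ may well have infinite cokernel.
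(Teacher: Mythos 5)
Your proposal is correct, but it routes the equivalences differently from the paper. The paper proves a single cycle (a) $\Rightarrow$ (b) $\Rightarrow$ (c) $\Rightarrow$ (d) $\Rightarrow$ (a): the step (b) $\Rightarrow$ (c) invokes the well-known (and not entirely trivial) fact that for a torsion-free group $G$ of finite rank, \emph{any} monomorphism $\phi:G\to G$ has finite cokernel $G/\phi(G)$, so that co-finite surjectivity forces every monomorphism to be onto; then (c) $\Rightarrow$ (d) uses multiplication by $p$, exactly as you do. You instead close the co-finite cycle (d) $\Rightarrow$ (a) $\Rightarrow$ (b) $\Rightarrow$ (d) and prove (c) $\Leftrightarrow$ (d) separately, so the only finiteness input you need is the elementary special case that $G/pG$ is an elementary abelian $p$-group of rank at most $\mathrm{rank}(G)$, hence finite -- your verification of this via a primitive integral relation (using torsion-freeness to divide out the gcd) is sound. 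What each approach buys: yours is more self-contained, replacing the ``well known'' general finiteness of $G/\phi(G)$ for arbitrary monomorphisms by an easy computation with the single witness $\mu_p$, while the paper's route makes the implication from co-finite surjectivity to co-Hopficity explicit, which is conceptually informative since it exhibits injective endomorphisms as a special class of finite-cokernel endomorphisms in finite rank. Two cosmetic points: non-divisibility already implies failure of $p$-divisibility for some $p$ in any abelian group, so your appeal to torsion-freeness there is unnecessary (though harmless); and in (d) $\Rightarrow$ (c) you are implicitly using that any group endomorphism of $\Q^n$ is automatically $\Q$-linear, which holds since $\Q^n$ is divisible and torsion-free, and which matches the observation already made inside the paper's proof of Proposition~\ref{1.9}(a).
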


\begin{proof} That (a) implies (b) is immediate from the definitions. Suppose next that (b) holds, i.e.,  $G$ is co-finitely surjective. To verify (c), let $\phi:G\to G$ be an injective homomorphism. Since $G$ is torsion-free of finite rank and $G\cong \phi(G)\leq G$, it is well known that $G/\phi(G)$ must be finite. So, if $G$ is co-finitely surjective, $\phi$ must be surjective, i.e., $G$ is co-Hopfian.

Suppose now that (c) holds. It elementarily follows that any co-Hopfian torsion-free group must be divisible; so, in particular, this holds when $G$ has finite rank (this follows by considering the injective endomorphisms $\phi(x)=px$ for all primes $p$.)

Finally, (d) implies (a) follows from Proposition~\ref{1.9}, thus completing the proof.
\end{proof}

\begin{ex}
Any torsion-free group of finite rank that fails to be divisible is co-finitely injective, but {\it not} co-finitely surjective, and hence {\it not} co-finitely Hopfian.
\end{ex}

On the other hand, the following is immediate from Proposition~\ref{3}.

\begin{cor}
A finite rank torsion-free group that is co-finitely surjective will be co-finitely injective.
\end{cor}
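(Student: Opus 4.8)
The plan is to invoke Proposition~\ref{3} directly. Suppose $G$ is a torsion-free group of finite rank that is co-finitely surjective; this is precisely condition (b) of Proposition~\ref{3}. Since the four conditions listed there are equivalent for such a $G$, I immediately obtain that $G$ is co-finitely Hopfian, that is, condition (a) holds as well.

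Next I would recall the elementary observation from the introduction that a group is co-finitely Hopfian if and only if it is simultaneously co-finitely injective and co-finitely surjective. Applying this equivalence to the co-finitely Hopfian group $G$ yields at once that $G$ is co-finitely injective, which is exactly the desired conclusion. Thus the entire argument reduces to chaining together the equivalence (b)$\implies$(a) of Proposition~\ref{3} with the definitional splitting of the co-finitely Hopfian property.

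I do not anticipate any genuine obstacle, since all the substantive work has already been absorbed into Proposition~\ref{3}; the corollary is simply a repackaging of the equivalence (a)$\iff$(b). In fact, one could bypass Proposition~\ref{3} altogether and appeal to Proposition~\ref{1}, which guarantees that \emph{every} torsion-free group of finite rank is co-finitely injective---so the hypothesis of co-finite surjectivity is not even strictly necessary for the stated conclusion. I would nonetheless present the argument through Proposition~\ref{3} to keep it aligned with the surrounding exposition, perhaps flagging the Proposition~\ref{1} route as an even more immediate justification.
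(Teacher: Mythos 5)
Your proof is correct and matches the paper's, which likewise treats the corollary as immediate from Proposition~\ref{3}: the chain (b)$\Rightarrow$(a) plus the definitional splitting of co-finitely Hopfian into co-finitely injective and co-finitely surjective is exactly the intended argument. Your side remark that Proposition~\ref{1} already gives co-finite injectivity for \emph{all} finite rank torsion-free groups, making the hypothesis superfluous, is also accurate and consistent with the paper.
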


Furthermore, to show these ideas are independent for groups of infinite rank, we present some additional examples.

\begin{ex}
Suppose $G$ is the $p$-adic integers for some arbitrary but a fixed prime $p$. Note that any non-zero endomorphism $\phi:G\to G$ will be multiplication by some $x\in G$ and, in particular, it will be injective. This immediately insures that $G$ must be co-finitely injective. Moreover, if $\phi:G\to G$ is multiplication by $p$, then it must be that $G/\phi(G)\cong \Z(p)$. Since this quotient is finite, but $G\not= \phi(G)$, it follows that $G$ cannot be co-finitely surjective.

On the other hand, if $G$ is a torsion-free divisible group of infinite rank, it follows that $G$ is co-finitely surjective, but not co-finitely injective, as we asked.
\end{ex}

At a later point, we will exhibit examples of reduced groups that are co-finitely surjective, but {\it not} co-finitely injective.

\section{Completely decomposable torsion-free groups}

The following slight generalization of Proposition~\ref{3} applies, for example, to torsion-free groups that are completely decomposable.

\begin{cor}\label{exclud}
Suppose $G=\bigoplus_{i\in I} A_i$, where each $A_i$ is torsion-free of finite rank. Then, $G$ is co-finitely Hopfian if and only if it is co-Hopfian, i.e., if and only if it is finite-rank and divisible. And $G$ is co-finitely surjective if and only if it is divisible.
\end{cor}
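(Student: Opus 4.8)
The plan is to derive both equivalences from the finite-rank results already established, using Lemma~\ref{0.5} to transfer each hypothesis down to the summands $A_i$ and then Propositions~\ref{1.9} and \ref{3} to identify what those hypotheses force. Throughout I would discard any zero summands, so that $G$ has finite rank precisely when the index set $I$ is finite.

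I would dispose of the co-finitely surjective claim first, as it is the cleaner of the two. For sufficiency, Proposition~\ref{1.9}(b) says that any divisible group is co-finitely surjective, so a divisible $G$ qualifies. For necessity, suppose $G$ is co-finitely surjective. Since each $A_i$ is a summand of $G$, Lemma~\ref{0.5} makes every $A_i$ co-finitely surjective; being torsion-free of finite rank, Proposition~\ref{3} then forces each $A_i$ to be divisible. A direct sum of divisible groups is divisible, so $G$ is divisible, completing this part.

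For the co-finitely Hopfian statement I would show that each of the three listed conditions is equivalent to ``$G$ is divisible of finite rank.'' If $G$ is co-finitely Hopfian then it is in particular co-finitely surjective, hence divisible by the part just proved; it is also co-finitely injective, and a divisible co-finitely injective group is torsion-free of finite rank by Proposition~\ref{1.9}(a). Conversely, a torsion-free divisible group of finite rank is $\mathbb{Q}^n$, which is co-finitely Hopfian by Proposition~\ref{1.9}(a) (as is recorded in its proof). Finally, that ``divisible of finite rank'' coincides with co-Hopfian for such $G$ is elementary: $\mathbb{Q}^n$ is co-Hopfian because an injective $\mathbb{Q}$-linear self-map of a finite-dimensional space is surjective, while a co-Hopfian torsion-free group must be divisible (multiplication by any prime is injective, hence surjective) and of finite rank (an infinite-rank torsion-free divisible group $\mathbb{Q}^{(\kappa)}$ admits an injective non-surjective shift).

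Since every step reduces to an already established proposition, I do not anticipate a genuine obstacle; the one place demanding care is recognizing that it is the co-finitely \emph{injective} half of the Hopfian hypothesis, fed through Proposition~\ref{1.9}(a), that rules out infinite rank. Divisibility alone, which is all that co-finite surjectivity yields, would not suffice, as $\mathbb{Q}^{(\omega)}$ is divisible and co-finitely surjective yet fails to be co-finitely injective and hence is not co-finitely Hopfian.
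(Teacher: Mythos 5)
Your proposal is correct and follows essentially the same route as the paper: use Lemma~\ref{0.5} to pass each hypothesis down to the finite-rank summands $A_i$, apply Proposition~\ref{3} to force each $A_i$ (hence $G$) to be divisible, and then invoke Proposition~\ref{1.9} to finish. Your only departure is cosmetic --- you spell out the elementary co-Hopfian verifications (multiplication by a prime, the shift on $\Q^{(\kappa)}$, injective implies surjective on $\Q^n$) that the paper leaves implicit in its citation of Propositions~\ref{3} and \ref{1.9}.
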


\begin{proof}
Suppose $G$ is either co-finitely Hopfian, co-Hopfian, or co-finitely surjective. Since a summand of such a group certainly retains that property, we can conclude that each $A_i$ satisfies that property. It follows from Proposition~\ref{3} that each $A_i$ is divisible, so that $G$ is divisible as well. The rest of the argument follows directly from Proposition~\ref{1.9}.
\end{proof}

Suppose $G=\bigoplus_{i\in I}A_i$ is a completely decomposable torsion-free group, where each $A_i$ has rank 1 and type $\tau_i$. We will say $G$ satisfies the {\it descending type condition on rank 1 summands} if there does {\it not} exist distinct elements of $I$, say $i_1,i_2,\dots$, such that $\tau_1\geq \tau_2\geq \tau_3\geq  \cdots$.

\medskip

The following claim clarifies this condition a bit more.

\begin{lem}\label{descending}
The above completely decomposable torsion-free group $G=\bigoplus_{i\in I}A_i$ satisfies the descending type condition  on rank 1 summands if and only if two things hold:

(a) For any $i\in I$, the set of all $j\in I$ such that $\tau_j=\tau_i$ is finite.

(b) For any non-empty subset $J\leq I$, $\{\tau_j:j\in J\}$ has a minimal element.

\end{lem}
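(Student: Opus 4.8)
The plan is to prove both directions by contraposition, since the descending type condition is stated as a non-existence assertion, and its failure naturally splits into exactly the two phenomena ruled out by conditions (a) and (b). Throughout I will use the standard fact that on the types $\tau_i$ of the rank 1 summands we have a partial order, and I will exploit the assumption that the index set $I$ is rank 1 summand data so that ``distinct elements $i_1, i_2, \dots$'' refers to a genuinely infinite sequence of distinct indices.

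\medskip

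For the forward direction, I would assume the descending type condition holds and verify (a) and (b). For (a), suppose toward a contradiction that for some fixed $i \in I$ the set $\{j \in I : \tau_j = \tau_i\}$ is infinite. Then I can list infinitely many distinct indices $i_1, i_2, \dots$ all sharing the common type $\tau_i$, and since $\tau_{i_1} = \tau_{i_2} = \cdots$ trivially gives a descending chain $\tau_{i_1} \geq \tau_{i_2} \geq \cdots$, this contradicts the descending type condition. For (b), suppose some non-empty $J \leq I$ has no minimal element in $\{\tau_j : j \in J\}$. I would then build a strictly descending chain recursively: pick any $j_1 \in J$; since $\tau_{j_1}$ is not minimal, there is $j_2 \in J$ with $\tau_{j_2} < \tau_{j_1}$; continuing, I obtain distinct indices with $\tau_{j_1} > \tau_{j_2} > \cdots$, again violating the condition.

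\medskip

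For the converse, I would assume (a) and (b) hold and derive the descending type condition. Suppose, for contradiction, that there exist distinct $i_1, i_2, \dots$ with $\tau_{i_1} \geq \tau_{i_2} \geq \tau_{i_3} \geq \cdots$. Consider the set $J = \{i_1, i_2, \dots\}$, which is non-empty, so by (b) the set $\{\tau_{i_k} : k \geq 1\}$ has a minimal element, say $\tau_{i_N}$. Because the chain is descending, for every $k \geq N$ we have $\tau_{i_k} \leq \tau_{i_N}$, and minimality of $\tau_{i_N}$ forces $\tau_{i_k} = \tau_{i_N}$ for all $k \geq N$. But then $\{i_N, i_{N+1}, \dots\}$ is an infinite set of distinct indices all having type $\tau_{i_N}$, which contradicts (a). Hence no such descending sequence exists.

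\medskip

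The main subtlety, and the step I would handle most carefully, is the distinction between a weakly descending chain ($\geq$) and a strictly descending one, together with the role played by the \emph{distinctness} of the indices. The whole point is that condition (b) alone rules out strictly descending chains but permits infinite weakly descending (in fact constant) chains, while condition (a) rules out precisely those infinite constant-type repetitions; only together do they exclude every weakly descending sequence of distinct indices. I would make sure the recursive selection in (b) genuinely produces strict decreases and distinct indices, and that in the converse I correctly pass from the weak chain to an eventually constant tail before invoking (a); these bookkeeping points are routine but are where an incautious argument could slip.
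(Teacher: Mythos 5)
Your proof is correct and takes essentially the same approach as the paper's: both argue by contraposition, splitting the failure of the descending type condition into the two phenomena of an eventually constant tail of types (ruled out by (a)) and infinitely many strict decreases (ruled out by (b)). The only differences are organizational --- the paper case-splits on whether infinitely many inequalities are strict, while you invoke (b)'s minimal element to force the constant tail directly, and you write out in full the direction the paper leaves to the reader --- so the mathematical content coincides.
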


\begin{proof}
Suppose (a) and (b) hold; we assume $i_1,i_2,\dots$ are distinct elements of $I$ such that $\tau_1\geq \tau_2\geq \tau_2\geq  \cdots$, and derived a contradiction. If an infinite number of these inequalities were strict, it would follow that $J=\{i_k:k\in \N\}$ violates (b). On the other hand, if $N\in \N$ can be found such that $\tau_N=\tau_{N+1}= \tau_{N+2}=  \cdots$, then this would violate condition (a).

The converse is similarly elementary and thus left to the reader.
\end{proof}

We now come to the promised above criterion for a completely decomposable torsion-free group to be co-finitely injective and, by extension, co-finitely Hopfian. Specifically, the following is true.

\begin{thm}\label{4} Suppose $G=\bigoplus_{i\in I}A_i$ is a completely decomposable torsion-free group, where each $A_i$ has rank 1 and type $\tau_i$. Then, the following statements are equivalent:

(a) $G$ is co-finitely injective;

(b) $G$ is Hopfian;

(c) $G$ satisfies the descending type condition on rank 1 summands.
\end{thm}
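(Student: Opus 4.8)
The plan is to establish the cycle (a)$\Rightarrow$(b)$\Rightarrow$(c)$\Rightarrow$(a). The first implication is free: it was recorded in the Introduction that every co-finitely injective group is Hopfian, so (a) gives (b) at once. Throughout I will use the single structural fact that for rank-$1$ torsion-free groups a nonzero homomorphism $A_i\to A_j$ exists exactly when $\tau_i\le\tau_j$; in particular every endomorphism in sight is ``type-upper-triangular'', in the sense that the $A_j$-component of $\phi(A_i)$ can be nonzero only when $\tau_i\le\tau_j$.

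For (b)$\Rightarrow$(c) I argue contrapositively. If the descending type condition fails then, by Lemma~\ref{descending}, there are distinct indices $i_1,i_2,\dots$ with $\tau_{i_1}\ge\tau_{i_2}\ge\cdots$; it suffices to produce a surjective, non-injective endomorphism of the summand $H=\bigoplus_k A_{i_k}$, since extending it by the identity on a complement then shows that $G$ is not Hopfian. I first partition $\{2,3,\dots\}$ into infinite blocks $S_1,S_2,\dots$ with $S_k\subseteq\{m:m\ge k\}$ (a routine bookkeeping), leaving the index $1$ in no block. Each $m\in S_k$ satisfies $\tau_{i_m}\le\tau_{i_k}$, so there are nonzero maps $A_{i_m}\to A_{i_k}$, and I define $\phi$ by sending the block $\{A_{i_m}:m\in S_k\}$ into $A_{i_k}$ and annihilating $A_{i_1}$. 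Disjointness of the blocks makes every column of $\phi$ have a single nonzero entry, so $\phi$ is a genuine endomorphism and $A_{i_1}\subseteq\ker\phi$ is nonzero. The real point is surjectivity: a height computation shows that for each $m\in S_k$ and each target $g\in A_{i_k}$ there exist $c\in\mathrm{Hom}(A_{i_m},A_{i_k})$ and $a\in A_{i_m}$ with $c\,a=g$ (the larger type $\tau_{i_k}$ carries the extra divisibility), so by letting $m$ range over the infinite block $S_k$ I can arrange the images to exhaust a generating set of $A_{i_k}$, forcing $\phi(H)=H$.

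For (c)$\Rightarrow$(a), assume the descending type condition and let $\phi:G\to G$ have finite cokernel; I must show $\ker\phi=\{0\}$. Since $G$ is torsion-free this is equivalent to injectivity of $T:=\phi\otimes\mathbb{Q}$ on $V:=G\otimes\mathbb{Q}=\bigoplus_i\mathbb{Q}$, and finiteness of $G/\phi(G)$ forces $T$ to be surjective. The triangularity above makes every ``up-set'' subspace $\bigoplus_{\tau_i\ge\tau}\mathbb{Q}$ invariant under $T$. I then peel off minimal types by transfinite induction along the poset of types, which is well-founded and has finite multiplicities by Lemma~\ref{descending}: for a minimal type $\mu$ the block $\bigoplus_{\tau_i=\mu}\mathbb{Q}$ is finite-dimensional, the compression of $T$ to it is onto, hence bijective, and since distinct minimal types do not interact the induced map on their span is bijective; a snake-lemma argument then shows that $T$ restricts to a surjection of the invariant subspace spanned by the non-minimal types, with the same kernel. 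Iterating pushes $\ker T$ into the span of types of ever larger rank, and the finite support of any vector forces $\ker T=\{0\}$; the bijectivity of each finite-dimensional block is elementary linear algebra, reflecting the finite-rank case of Proposition~\ref{1}.

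I expect the decisive difficulty to lie in the surjectivity claim of (b)$\Rightarrow$(c). A naive backward shift along the chain fails precisely because, when the types strictly decrease, the connecting monomorphisms $A_{i_{k+1}}\to A_{i_k}$ are proper, so their cokernels are divisible and the shift is nowhere near onto. Circumventing this is what forces both the decomposition into disjoint infinite blocks (to decouple the coordinate equations while preserving finite supports, and to reserve a summand for the kernel) and the valuation argument showing that maps out of smaller-type summands, once suitably rescaled, can after all sweep out an entire larger-type summand.
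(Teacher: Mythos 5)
Your proposal is correct and follows essentially the same route as the paper: the same cycle (a)$\Rightarrow$(b)$\Rightarrow$(c)$\Rightarrow$(a), the same contrapositive construction for (b)$\Rightarrow$(c) using disjoint infinite index blocks $S_k$ and homomorphisms from smaller-type summands hitting each element of an enumeration of $A_{i_k}$, and the same argument for (c)$\Rightarrow$(a) via tensoring with $\Q$ (so finite cokernel forces surjectivity), invariance of the type-upper-triangular subspaces, and a well-founded transfinite peeling of minimal types with finite-dimensional blocks handled by linear algebra and successor steps by a snake/long-exact-sequence argument. The only cosmetic differences are that you remove all minimal types at each stage rather than one at a time along a well-ordering of the type set, and you explicitly annihilate $A_{i_1}$ to exhibit the kernel, which in the paper is supplied automatically by the kernels $K_k$ of the block surjections.
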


\begin{proof} Certainly, (a) assures (b). To establish that (b) ensures (c), we argue by contrapositive. So, suppose (c) fails for $G$; we need to check that $G$ is not Hopfian. To this target, suppose $i_1,i_2,\dots$ are distinct elements of $I$ such that $\tau_1\geq \tau_2\geq \tau_3\geq  \cdots$. Considering the summand
$
                     H= \bigoplus_{j\in \N} A_{i_j},
$
if $H$ is not Hopfian, then $G$ is also not Hopfian. Replacing $G$ by $H$, there is no loss of generality in assuming $I=\N$ and, for every $i\in \N$, $\tau_i\geq \tau_{i+1}$.

Let $\{S_k\}_{k\in \N}$ be a collection of disjoint and infinite subsets of $\N$. Replacing $S_k$ by $\{j\in S_k: j\geq k\}$, there is no loss of generality in assuming that $i\geq k$ for all $i\in S_k$.

Let $\{a_i\}_{i\geq k}$ be an enumeration of $A_k$. It is readily seen that, for every $k, i\in \N$ with $k\leq i$,  there is a homomorphism $\gamma:A_i\to A_k$ such that $a_i\in \gamma(A_i)$. This gives that, for every $k\in \N$, there is a short-exact sequence
$$
           0 \to K_k\to \bigoplus_{i\in S_k} A_i \to A_k \to 0.
$$
Let $\phi_k$ be the right-hand map of this sequence. Putting these together, for all $k\in \N$, we get a surjective composition
$$
         \phi:  G\to \bigoplus_{k\in \N} \left(\bigoplus_{i\in S_k} A_i\right) \to \bigoplus_{k\in \N} A_k=G.
$$
Since each $K_k\ne \{0\}$, the kernel of $\phi$ is non-zero and, in fact, infinite. Therefore, $G$ is not Hopfian, as stated.

\medskip 

We now prove that if $G$ satisfies (c), then (a) must hold. In other words, if $\{\tau_i:i\in I\}$ satisfies the descending type condition, then $G$ is co-finitely injective. So, suppose $\phi:G\to G$ is a homomorphism such that $G/\phi(G)$ is finite. We need to prove that such a map $\phi$ is injective.

Let $\{\tau_j\}_{j\in J}$ be the collection of {\it distinct} types among the $\{\tau_i\}_{i\in I}$. Thus,
$$G=  \bigoplus_{i\in I}A_i=\bigoplus_{j\in J}B_j,$$
where, for each $j\in J$, $B_j$ is $\tau_j$-homogeneous completely decomposable. Note that, in virtue of Lemma~\ref{descending}(a), each $B_j$ must have finite rank. Since every subset of $\{\tau_j\}_{j\in J}$ has a minimal element thanks to Lemma~\ref{descending}(b), it follows that we can well-order $J$ so that, for each $\alpha\in J$,
$
                  \tau_\alpha
$ is minimal in $\{\tau_\beta: \alpha\leq \beta\in J\}$. Let $Q=\Q\otimes G$ and, for every $\alpha\in J$, let
$$
                      G_\alpha = \bigoplus_{\alpha\leq \beta\in J} B_\beta\leq G
$$
and $Q_\alpha=\Q\otimes G_\alpha\leq Q$. We will also interpret $Q_\alpha=\{0\}$ for all sufficiently large $\alpha$.

Clearly, $\phi:G\to G$ extends to a homomorphism $\gamma:Q\to Q$, and since $G/\phi(G)$ is finite, and hence torsion, it follows that $\gamma$ must be surjective.

We claim, for all $\alpha\in J$, that $\gamma(Q_\alpha)\leq Q_\alpha$: For every $\lambda<\alpha$ and $\sigma\geq \alpha$, let $\pi_\lambda:G\to B_\lambda$ be the usual projections and $\mu_\sigma: B_\sigma \to G$ be the usual inclusions. Since $\tau_\lambda$ is minimal in $\{\tau_\beta: \beta \geq \lambda\}$ and $\lambda< \alpha\leq \sigma$, we can deduce that $\tau_\lambda \not> \tau_\sigma$. And since $\sigma\ne \lambda$, we also have $\tau_\lambda \ne \tau_\sigma$, i.e, $\tau_\lambda \not\geq \tau_\sigma$. Therefore, $$\pi_\lambda\circ \phi\circ \mu_\sigma:A_\sigma\to A_\lambda$$ must be $\{0\}$. Letting $\lambda$ range over all values $<\alpha$ and $\sigma$ range over values $\geq \alpha$, gives the wanted result.

For each $\alpha\in J$, let $\gamma_\alpha: Q_\alpha\to Q_\alpha$ be the obvious restriction of $\gamma$, and $\gamma^\alpha:Q/Q_\alpha\to Q/Q_\alpha$ be the natural homomorphism induced by $\gamma$. Since $\gamma$ is surjective, it immediately follows that each $\gamma^\alpha$ is surjective as well.

We, actually, claim that each $\gamma^\alpha$ is also injective, which of course is only possible if $\gamma$ itself is injective. We handle this by inducting on $\alpha\in J$, so we assume $\gamma^\sigma$ is injective for all $\sigma<\alpha$ and argue that $\gamma^\alpha$ must also be injective: Suppose first that $\alpha$ is a limit ordinal. If $\overline{0}\ne x\in Q/Q_\alpha$, then there clearly must exist an ordinal $\sigma<\alpha$ such that $x$ is also non-zero in the factor-group $Q/Q_\sigma$. Since we are assuming $\gamma^\sigma$ is injective, it follows that $\gamma(x)$ represents a non-zero element of $Q/Q_\sigma$. In other words, $\gamma(x)$ is not in $Q_\sigma$, so that it cannot be in $Q_\alpha$, either. Therefore, $\gamma^\alpha(x)\ne 0$. So, the kernel of $\gamma^\alpha$ is precisely $\{0\}$, i.e., it is injective, as expected.

Suppose now that $\alpha=\sigma+1$. There is an natural diagram of short-exact sequences
$$       \begin{CD}
               0 @>>> Q_\sigma/Q_{\alpha} @>>>       Q/Q_\alpha           @>>> Q/Q_{\sigma} @>>> 0\\
             @.@VV{\gamma}'V   @VV\gamma^\alpha V        @VV\gamma^\sigma V\\
               0 @>>> Q_\sigma/Q_{\alpha} @>>>       Q/Q_\alpha           @>>> Q/Q_{\sigma} @>>> 0\\
      \end{CD}
$$
Let $K'$, $K_\alpha$ and $K_\sigma$ be the kernels of the vertical maps in this diagram, and $C'$, $C_\alpha$ and $C_\sigma$ denote their co-kernels. Since $\gamma^\alpha$ and $\gamma^\sigma$ are surjective, it must be that $C_\alpha=C_\sigma=\{0\}$. Thus, there is a long-exact sequence
$$
0 \to K'\to K_\alpha\to K_\sigma \to C'\to 0.
$$
Furthermore, by induction, $\gamma^\sigma$ is injective, so that $K_\sigma=0$. Consequently, $C'=\{0\}$, so that $\gamma'$ has to be surjective. But since the quotient $Q_\sigma/Q_{\alpha}\cong \Q\otimes B_\alpha$ is finite dimensional over $\Q$, the fact that $\gamma'$ is surjective leads to this that it is also injective. Finally, $K'=\{0\}$, which enables us that $K_\alpha=\{0\}$, as required.
\end{proof}

Particularly, an appeal to Theorem~\ref{4} teaches that, if $G=\oplus_{i\in I} A_i$ is completely decomposable of infinite rank and the types of the summands $A_i$ satisfy the descending type condition, then $G$ is co-finitely injective; but, Corollary~\ref{exclud} manifestly illustrates that a completely decomposable co-finitely surjective group must be divisible. For example, one can just have the types of the subgroups $A_i$ being pairwise incomparable for distinct $A_i, A_j$. So, there are lots of examples besides those mentioned above of a co-finitely injective group (possibly of infinite torsion-free rank) that is {\it not} co-finitely surjective, and hence not co-finitely Hopfian.

\section{Cotorsion groups}

We now deal with cotorsion groups as defined in \cite{F1,F2} and, concretely, when a cotorsion group is co-finitely injective (respectively, co-finitely surjective or co-finitely Hopfian). 

\begin{prop}\label{2.7}
A cotorsion group $G$ is co-finitely surjective if and only if it is divisible.
\end{prop}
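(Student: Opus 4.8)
The plan is to establish both directions, with the nontrivial one resting on the structure theory of cotorsion groups. The easy direction is immediate: if $G$ is divisible, then Proposition~\ref{1.9}(b) already guarantees that $G$ is co-finitely surjective.

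For the converse I would first reduce to the reduced torsion-free case. Writing $G=R\oplus D$ with $R$ reduced and $D$ divisible, Proposition~\ref{1.95}(b) shows that $R$ is co-finitely surjective; moreover $R$, being a summand of the cotorsion group $G$, is itself cotorsion, and Corollary~\ref{0.25}(b) makes $R$ torsion-free. Thus it suffices to prove that a reduced torsion-free cotorsion group which is co-finitely surjective must be $\{0\}$, whence $G=D$ is divisible.

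The key step --- and the one I expect to be the main obstacle --- is a structural claim: every nonzero reduced torsion-free cotorsion group $R$ has a direct summand isomorphic to the group $\mathbb{Z}_p$ of $p$-adic integers for some prime $p$. Here I would invoke that such an $R$ is algebraically compact (see \cite{F1,F2}) and hence decomposes as a product $R\cong \prod_p R_p$, where each $R_p$ is the $p$-adic completion of a free $\mathbb{Z}_p$-module. Since $R\ne \{0\}$, some $R_p\ne \{0\}$; as a factor of a product, $R_p\summand R$. A single basis vector of the dense free submodule of $R_p$ generates a pure copy of $\mathbb{Z}_p$, and since $\mathbb{Z}_p$ is pure-injective (being algebraically compact) this pure inclusion splits, giving $\mathbb{Z}_p\summand R_p\summand R$. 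Verifying this splitting carefully, via the pure-injectivity of $\mathbb{Z}_p$, is the delicate point.

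Finally I would derive the contradiction. If the co-finitely surjective group $R$ were nonzero, the structural claim would produce a summand $\mathbb{Z}_p\summand R$, and Lemma~\ref{0.5} would force $\mathbb{Z}_p$ to be co-finitely surjective as well. But the example given above for the $p$-adic integers shows that multiplication by $p$ yields the finite nonzero cokernel $\Z(p)$, so $\mathbb{Z}_p$ is \emph{not} co-finitely surjective. This contradiction shows $R=\{0\}$, so $G$ is divisible, as required.
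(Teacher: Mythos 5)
Your proposal is correct and follows essentially the same route as the paper's proof: reduce to the reduced torsion-free case, use algebraic compactness to extract a summand isomorphic to $\hat\Z_p$, and contradict the example showing the $p$-adic integers are not co-finitely surjective. The only difference is that you spell out, via pure-injectivity, the splitting of a pure copy of $\hat\Z_p$ inside $R_p\cong$ the completion of a free $p$-adic module, a step the paper simply cites as known structure theory of reduced torsion-free algebraically compact groups.
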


\begin{proof}
Sufficiency follows directly from Proposition~\ref{1.9}(b), so assume $G$ is co-finitely surjective. We need to establish that its reduced part is precisely $\{0\}$, so there is no loss of generality in assuming that $G$ is reduced. Consulting with Corollary~\ref{0.25}(b), we can conclude that $G$ is torsion-free. Now, a reduced torsion-free co-torsion group is algebraically compact, so if $G\ne \{0\}$, then, for some prime $p$, $G$ must have a summand isomorphic to the $p$-adic integers, $\hat \Z_p$. However, we already have noted that $\hat \Z_p$ is not co-finitely surjective, and so this contradiction demonstrates that $G=\{0\}$, concluding the proof.
\end{proof}

On the other hand, we have the following characterization of cotorsion groups that are co-finitely injective.

\begin{prop}\label{cotorsion} Let $G$ be a cotorsion group. Then, the following three items are equivalent:

(a) $G$ is co-finitely injective;

(b) $G$ is torsion-free and Hopfian;

(c) $G$ is isomorphic to $\Q^n\oplus (\prod_p G_p)$, where $n<\omega$ and, for each prime $p$, $G_p$ is the direct sum of a finite number of copies of the $p$-adic integers.
\end{prop}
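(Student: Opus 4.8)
The plan is to prove the cycle $(a)\Rightarrow(b)\Rightarrow(c)\Rightarrow(a)$. The first implication is essentially free: if $G$ is co-finitely injective, then Corollary~\ref{0.25}(a) shows $G$ is torsion-free, while the remark from the Introduction that a co-finitely injective group is Hopfian (a surjection $\phi$ has $G/\phi(G)=\{0\}$ finite, hence is injective) gives (b). For the remaining two implications I would first peel off the divisible part, writing $G=D\oplus R$ with $D$ torsion-free divisible and $R$ reduced; since $G$ is cotorsion and this property passes to summands, $R$ is a reduced torsion-free cotorsion group, hence algebraically compact. By the structure theory of algebraically compact groups (see \cite{F1,F2}) one then has $R\cong \prod_p R_p$, where each $R_p$ is the $p$-adic completion of a free $\hat{\Z}_p$-module $\hat{\Z}_p^{(\kappa_p)}$, together with $D\cong \Q^{(\lambda)}$.

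For $(b)\Rightarrow(c)$ the key point is that the Hopfian hypothesis forces every rank above to be finite. If $\lambda\geq \omega$, then $\Q^{(\omega)}$ is a summand of $G$ and the coordinate shift on $\Q^{(\omega)}$ is a surjective, non-injective endomorphism; extending it by the identity on a complement shows $G$ is not Hopfian. Similarly, if some $\kappa_p\geq \omega$, then splitting off a countable part and passing to $p$-adic completions -- an inverse limit, which carries the finite direct sum $\hat{\Z}_p^{(\omega)}\oplus \hat{\Z}_p^{(\kappa_p')}$ onto the corresponding product -- exhibits $(\hat{\Z}_p^{(\omega)})^{\wedge}$ as a summand of $R_p$, hence of $G$; the shift map on $(\hat{\Z}_p^{(\omega)})^{\wedge}$ is again surjective with non-zero kernel $\cong\hat{\Z}_p$, contradicting Hopficity. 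Thus $\lambda=n<\omega$ and each $\kappa_p=n_p<\omega$, and a finite-rank free $\hat{\Z}_p$-module is already complete, so $R_p\cong \hat{\Z}_p^{n_p}=G_p$ and $G\cong \Q^n\oplus\prod_p G_p$, which is (c).

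The implication $(c)\Rightarrow(a)$ is where the real work lies. By Proposition~\ref{1.95}(a) it suffices to show $R=\prod_p G_p$ is co-finitely injective, so let $\phi:R\to R$ satisfy $R/\phi(R)$ finite. The decisive structural fact is that $\mathrm{Hom}(\hat{\Z}_q,\hat{\Z}_p)=\{0\}$ whenever $p\ne q$: indeed $p$ is a unit in $\hat{\Z}_q$ so $\hat{\Z}_q$ is $p$-divisible, while $\hat{\Z}_p$ is reduced and torsion-free, whence the ($p$-divisible) image of any such map lies in $\bigcap_n p^n\hat{\Z}_p=\{0\}$. Since $\prod_{q\ne p}G_q$ is $p$-divisible, the composite $\pi_p\circ\phi$ annihilates it and therefore factors through $\pi_p$; consequently $\phi=\prod_p\phi_p$ is ``diagonal'', with each $\phi_p$ an endomorphism of $G_p=\hat{\Z}_p^{n_p}$. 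Because $\mathrm{Hom}_{\Z}(\hat{\Z}_p,\hat{\Z}_p)=\hat{\Z}_p$, each $\phi_p$ is given by an $n_p\times n_p$ matrix over $\hat{\Z}_p$. Identifying $R/\phi(R)\cong\prod_p \bigl(\hat{\Z}_p^{n_p}/\phi_p(\hat{\Z}_p^{n_p})\bigr)$, finiteness of the left side forces each factor to be finite; over the discrete valuation ring $\hat{\Z}_p$, Smith normal form then shows the defining matrix has non-zero determinant, so each $\phi_p$ is injective. Hence the product map $\phi$ is injective, and $R$ is co-finitely injective.

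I expect the main obstacle to be the $(c)\Rightarrow(a)$ direction, specifically justifying that an arbitrary (a priori only additive) endomorphism of the \emph{product} $\prod_p \hat{\Z}_p^{n_p}$ must respect the primary splitting. This ``diagonalization'' hinges on the $p$-divisibility of $\prod_{q\ne p}G_q$ together with the reducedness of $\hat{\Z}_p$; once it is in hand the remaining finite-rank computation over each $\hat{\Z}_p$ is routine. A secondary point to record carefully is that $p$-adic completion really does turn the infinite direct sum $\hat{\Z}_p^{(\omega)}$ into a genuine \emph{summand} of $R_p$ in the $(b)\Rightarrow(c)$ step, which is precisely what licenses the shift-map construction there.
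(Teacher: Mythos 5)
Your proposal is correct and follows essentially the same route as the paper: the same splitting $G=D\oplus R$ with $R\cong\prod_p \hat\Z_p^{(\kappa_p)}{}^{\wedge}$ via the structure theory of algebraically compact groups for $(b)\Rightarrow(c)$ (your shift map is just a concrete variant of the paper's projection off $G\cong G\oplus\hat\Z_p$), and the same reduction via Proposition~\ref{1.95}(a) plus primary diagonalization of $\phi$ for $(c)\Rightarrow(a)$. The only difference is cosmetic: you explicitly justify, via $\mathrm{Hom}(\hat\Z_q,\hat\Z_p)=\{0\}$ and $p$-divisibility of $\prod_{q\ne p}G_q$, the full invariance of the components $G_p$, which the paper asserts without proof.
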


\begin{proof} Consider (a) $\Rightarrow$ (b): In view of Corollary~\ref{0.25}, $G$ must be torsion-free and, moreover, it is clear that it must also be Hopfian.

Turning to (b) $\Rightarrow$ (c): Suppose $G\cong D\oplus R$, where $D$ is divisible and $R$ is reduced. We know $D$ must be torsion-free (since $G$ is), and if it had infinite rank, it would fail to be Hopfian; so $D\cong \Q^n$ for some $n<\omega$. Again, since $R$ is torsion-free and algebraically compact, \cite[Theorem~40.2]{F1} allows us to write that
$$G\cong \prod_p G_p,$$ where, for each prime $p$, $G_p$ is the $p$-adic completion of a free $p$-adic module, say $\hat\Z_p^{({\kappa_p})}$. Suppose, for some prime $p$, that $\kappa_p$ is infinite. It automatically follows that $G_p\cong G_p\oplus \hat\Z_p$, implying that $G\cong G\oplus \hat\Z_p$. Considering the projection of $G$ onto the first summand shows in addition that $G$ is not Hopfian, contrary to assumption.

Consider now (c) $\Rightarrow$ (a): Looking at Proposition~\ref{1.95}, it suffices to assume that $G=\prod_p G_p$ as above. If now $\phi:G\to G$ is an endomorphism, then since each direct component $G_p$ is fully invariant in $G$, we must have $\phi=(\phi_p)_{p\in {\mathcal P}}$, where each $\phi_p:G_p\to G_p$. If $G/\phi(G)$ is finite, it follows that
$$
G/\phi(G)\cong \prod_p (G_p/\phi_p(G_p))
$$
is finite too. So, there is a finite set of primes $\mathcal F$ such that, if $\mathcal Q=\mathcal P\setminus \mathcal F$, then, for every $p\in\mathcal F$, we have $G_p/\phi_p(G_p)$ is finite and, for every $q\in\mathcal Q$, we have $G_q/\phi_q(G_q)=\{0\}$. However, as each $G_p$ is a free $\hat Z_p$-module of finite rank, it follows that, if $p\in \mathcal F$, then $\phi_p$ is injective and, if $q\in \mathcal Q$, then $\phi_q$ is an isomorphism (and hence an injection). This means at once that $\phi=(\phi_p)_{p\in {\mathcal P}}$ must be injective, as required.
\end{proof}

We can now immediately extract the following consequence.

\begin{cor}\label{4.5}
The cotorsion group $G$ is co-finitely Hopfian if and only if it is torsion-free divisible of finite rank.
\end{cor}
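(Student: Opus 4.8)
The plan is to exploit the basic equivalence, recorded in the introduction, that a group is co-finitely Hopfian precisely when it is simultaneously co-finitely injective and co-finitely surjective. Since $G$ is assumed to be cotorsion, the two propositions immediately preceding this corollary pin down each of these two halves completely, so the statement should fall out by merely intersecting the two descriptions and keeping careful track of which half feeds into which proposition.

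For necessity, I would begin from the assumption that $G$ is a cotorsion co-finitely Hopfian group. Being co-finitely Hopfian, $G$ is in particular co-finitely surjective, so Proposition~\ref{2.7} forces $G$ to be divisible. On the other hand, $G$ is also co-finitely injective; and since a divisible co-finitely injective group is torsion-free of finite rank by the necessity half of Proposition~\ref{1.9}(a), I can conclude at once that $G$ is torsion-free divisible of finite rank, i.e.\ $G\cong\Q^n$ for some $n<\omega$.

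For sufficiency, suppose conversely that $G$ is torsion-free divisible of finite rank, so that $G\cong\Q^n$. Such a group is divisible, hence cotorsion, so it indeed lies within the scope of the statement. Moreover, the sufficiency argument of Proposition~\ref{1.9}(a) already records that $\Q^n$ is co-finitely Hopfian, which is exactly what is needed to finish.

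I do not anticipate any serious obstacle here, as essentially all of the content has been front-loaded into Propositions~\ref{2.7} and \ref{1.9}; the only thing to get right is the bookkeeping of which consequence of ``co-finitely Hopfian'' is being used at each step. The one point worth noting is that no appeal to the full structural classification of Proposition~\ref{cotorsion} is actually required: divisibility coming from the surjective side already collapses the reduced $p$-adic factors, so the argument can be kept as short as the combination of the two cited propositions permits.
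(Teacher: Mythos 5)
Your proof is correct and follows essentially the same route as the paper, which offers no explicit argument but ``immediately extracts'' the corollary from the surrounding results: split co-finitely Hopfian into its co-finitely surjective half (Proposition~\ref{2.7}, giving divisibility for cotorsion $G$) and its co-finitely injective half (Proposition~\ref{1.9}(a), giving torsion-free of finite rank), with sufficiency supplied by the remark in the proof of Proposition~\ref{1.9} that $\Q^n$ is in fact co-finitely Hopfian. Your observation that Proposition~\ref{cotorsion} can be bypassed is an inessential variation, since invoking its item (c) and then killing the factors $\prod_p G_p$ by divisibility yields the same conclusion.
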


So, if $G$ is either finite rank, completely decomposable, or cotorsion, then it is co-finitely Hopfian if, and only if, $G \cong \Q^n$ for some $n<\omega$. In what follows, we want to establish that this does {\it not} hold for arbitrary groups of infinite rank.

\section{Examples of Butler groups of infinite rank}

Suppose $G$ is a co-finitely Hopfian group; so, in particular, it is torsion-free. Then, we can infer that $G\cong \Q^n$ for some $n<\omega$ in the following cases: (a) $G$ has finite rank (Proposition~\ref{3}); (b) $G$ is completely decomposable (Corollary~\ref{exclud}); (c) $G$ is cotorsion (Corollary~4.5). In this section, we show that for some other types of groups, there are, in fact, many examples of reduced co-finitely Hopfian groups.

Recalling now \cite{B}, a torsion-free group $B$ of finite rank is said to be a {\it Butler group} if it is a pure subgroup of a completely decomposable group of finite rank; or, equivalently, it is an epimorphic image of a completely decomposable group of finite rank. There are two non-trivial generalizations of the notion of a Butler group to the infinite rank case (see, for a more account, \cite{F}): the group $B$ is said to be a {\it $B_1$-group} if ${\rm Pext}^1(B,T)=\{0\}$ for all torsion groups $T$, and is said to be a {\it $B_2$-group} if it is the smoothly ascending chain of pure subgroups $A_i$ such that, for each $i$, $A_{i+1}=A_i+C_i$, where $C_i$ is a pure finite-rank Butler subgroup of $B$. For groups of arbitrary rank, a $B_2$-group will always be a $B_1$-group and, for countable ranks, the two notions coincide.

Notice that, in \cite{DK1}, the authors defined the group $G$ to have {\it finite injective rank} if an endomorphism $\phi:G\to G$ is injective if, {\bf and only if,} $G/\phi(G)$ is finite. In particular, if $G$ has finite injective rank, then it is trivially co-finitely injective, and hence torsion-free.

\medskip

The next construction sheds a bit more clarity in this light.

\begin{ex}\label{infin}
There is a reduced (so, in particular, {\it not} divisible) torsion-free $B_2$-group $G$ of countably infinite rank that is co-finitely Hopfian (and hence co-finitely injective, and thus Hopfian), but $G$ does {\it not} have finite injective rank.
\end{ex}

\begin{proof}
Let $\mathcal P=\{p_0, p_1, p_2, \dots\}$ and $\mathcal Q=\{q_1, q_2, \dots\}$ be a partition of the collection of all primes into two infinite sets. Also, let $\{{\bf e}_i\}$ be the ``standard basis" for $V=\Q^{(\omega)}$. If ${\bf v}\in V$ and $p$ is a prime, let $(1/p^\infty){\bf v}=\langle (1/p^k){\bf v}: k\in \N\rangle$. We, thereby, define the group
$$
G:= \left(\sum_{i\in \omega} (1/p_i^{\infty})\,{\bf e}_i\right ) +  \left(\sum_{i\in \N} (1/q_i^{\infty})\,({\bf e}_0+{\bf e}_{i})\right).
$$

Note that, setting for each $i<\omega$
$$
A_i:=\langle {\bf e}_0, {\bf e}_1,\dots, {\bf e}_i \rangle_* \ \ \ {\rm and}\ \ \ C_i:=(1/p_{i+1}^\infty)\,{\bf e}_{i+1} + (1/q_{i+1}^\infty)\,({\bf e}_0+{\bf e}_{i+1}),
$$
trivially shows that $G$ is, in fact, a $B_2$-group. (In fact, it is straightforward to verify that it can be embedded as a pure subgroup of a countable rank completely decomposable group, which we leave out to the interested reader for a direct check.)

Let $\phi:G\to G$ be an endomorphism. We claim that, for some $n\in \Z$, $\phi(x)=nx$ for all $x\in G$: Indeed, for each $i< \omega$, $\langle p^\infty_i{\bf e}_i\rangle_*=p^\infty_i G$ is obviously fully invariant in $G$, so that $\phi({\bf e_i})=n_ip_i^{k_i}{\bf e}_i$, where $n_i, k_i\in \Z$. Similarly, each $(1/q_i^{\infty})\,({\bf e}_0+{\bf e}_{i})=q_i^\infty G$ is fully invariant in $G$, so that
$$
\phi({\bf e}_0+{\bf e}_{i})=m_iq_i^{\ell_i}({\bf e}_0+{\bf e}_{i})=m_iq_i^{\ell_i}{\bf e}_0+m_iq_i^{\ell_i}{\bf e}_{i},
$$ where $m_i, \ell_i\in \Z$. It follows now, for all $i\in \N$, that
$$
              n_0p_0^{k_0}{\bf e}_0+n_{i}p_{i}^{k_{i}}{\bf e}_{i}=\phi({\bf e}_0)+\phi({\bf e}_{i})=\phi({\bf e}_0+{\bf e}_{i})=m_iq_i^{\ell_i}{\bf e}_0+m_iq_i^{\ell_i}{\bf e}_{i}.
$$
This is only possible if, for all $i\in \N$, we have
$$n_ip_i^{k_i}=m_{i}q_{i}^{\ell_{i}}=n_0p_0^{k_0}=:n\in \Z,$$
giving our claim.

We now show that, if $\phi(x)=nx$ and $n\ne \pm 1$, then $G/\phi(G)$ is infinite: Certainly, this is the case if $n=0$, so we may freely assume $n\ne 0$. Thus, assuming $n\ne 0$ is composite, let $p$ be some prime dividing $n$, say to the power $\ell$. If $L_p$ denotes the integers localized at $p$, then localizing at $p$ gives the following relations
$$
         G_{(p)}= G\otimes L_p \cong \Q\oplus L_p^{(\N)}.
$$
It, likewise, follows that
$$
(G/\phi(G))_{(p)}\cong G_{(p)}/nG_{(p)}\cong (\Q/p^\ell\Q)\oplus (L_p/p^\ell L_p)^{(\N)}\cong \Z(p^\ell)^{(\N)},
$$
which is apparently infinite. Therefore, the only way $G/\phi(G)$ can be finite is when $n=\pm 1$, so that $\phi$ is an isomorphism, i.e., $G$ is co-finitely Hopfian, as pursued.

On the other hand, if $\phi$ is multiplication by some prime $p$, then $\phi$ is injective but $G/\phi(G)=G/pG$ is infinite, so that $G$ does not have finite injective rank, as we expected.
\end{proof}

In addition, the above example can be expanded to produce a series of examples of reduced co-finitely Hopfian groups of rank up to $2^{(2^{\aleph_0})}=2^c$: In fact, using the notation in that example, suppose that $\sigma:\N\to \N'$ is a permutation of $\N$ that is not the identity, so that $j':=\sigma(j)\ne j$ for some $j\in \N$. Doing the analogous construction by replacing $(1/q_i^{\infty})\,({\bf e}_0+{\bf e}_{i})$ with $(1/q_{\sigma (i)}^{\infty})\,({\bf e}_0+{\bf e}_{i})$ for all $i\in \N$, we will obtain a very similar group which we designate by $G_\sigma$. Note that, if $\phi:G\to G_\sigma$ is any homomorphism, then, for all $i\in \omega$, $\phi(p^\infty_iG)\subseteq p^\infty_iG_\sigma$ allows to set that $\phi({\bf e}_i)=n_ip_i^{k_i}{\bf e}_i$, where $n_i, k_i\in \Z$. If $j\ne \sigma(j)$, then $\phi(q^\infty_j G)\subseteq q^\infty_jG_\sigma$ readily gives that $\phi({\bf e}(0))=0$, which, in turn, implies that $\phi$ is the zero map. This computation verifies that
$
       \{ G_\sigma: \sigma\in {\mathcal S}\}
$
(where $\mathcal S$ is the collection of all bijective maps $\sigma:\N\to \N$, and $G_{1_{\N}}=G$ is a so-called {\it rigid system}). It now easily follows that $H_{\rm sum}:=\bigoplus_{\sigma\in \mathcal S} G_\sigma$ and $H_{\rm prod}:=\prod_{\sigma\in \mathcal S} G_\sigma$ have endomorphism rings isomorphic to $\prod_{\sigma \in \mathcal S} \Z$, and thus are, in fact, co-finitely Hopfian. Clearly, $H_{\rm prod}$ has now the desired rank of $2^c$.

\medskip

In fact, using more sophisticated existence results, it is straightforward to generalize these constructions to produce reduced co-finitely Hopfian groups of arbitrarily large rank: Indeed, it is well known that there are arbitrarily large torsion-free reduced groups $G$ with endomorphism ring isomorphic to $\Z$ such that, for all primes $p$, $G/pG$ is infinite (cf. \cite{F1,F2}), and an identical proof to the above shows that each such will be co-finitely Hopfian, but will {\it not} have finite injective rank. In fact, large rigid systems of such groups can also be constructed, same as above, and their direct sums and products will once again be co-finitely Hopfian.

\medskip

Again, suppose $G$ is reduced and co-finitely surjective. If $G$ has finite rank or is either completely decomposable or cotorsion, then it must be $\{0\}$, so that it is also co-finitely injective (whence co-finitely Hopfian). One may ask, then, at least for $B_2$-groups, is every co-finitely surjective group also co-finitely injective, and hence co-finitely Hopfian? The following example provides a negative answer to this question.

\begin{ex}\label{7.0}
There is a reduced torsion-free $B_2$-group $H$ of countably infinite rank that is co-finitely surjective, but {\it not} Hopfian. So, this $H$ is {\it neither} co-finitely injective, {\it nor} co-finitely Hopfian, {\it nor} does it have finite injective rank.
\end{ex}

\begin{proof}
Let $G$ be the group from Example~\ref{infin}, let $I$ be some (non-empty) index set, and let $H:=G^{(I)}=\bigoplus_{i\in I} G_i$, where each $G_i\cong G$. Now, it is obvious that $H$ has to be a $B_2$-group; in fact, since $G$ is a pure subgroup of a completely decomposable group, so is $H$.

We next propose to prove the following three properties:

\medskip

(A) $H$ does {\it not} have finite injective rank for any index set $I$.

(B) $H$ is co-finitely injective if and only if it is Hopfian; if and only if $I$ is finite.

(C) $H$ is co-finitely surjective for any index set $I$.

\medskip

Since $G$ does not have finite injective rank and is isomorphic to a summand of $H$, one sees that $H$ never has finite injective rank, so that (A) holds. Before verifying the other two assertions, we discuss some general ideas.

Since $G$ is torsion-free, if
$$
          0 \to A\to B\to C\to 0
$$
is a short-exact sequence, then, as the torsion product ${\rm Tor} (G,C)=0$, it follows that there is an induced short-exact sequence
$$
               0 \to A\otimes G\to B\otimes G\to C\otimes G\to 0.
$$
(In other words, as a $\Z$-module, the group $G$ is torsion-free and hence {\it flat}.) In particular, a surjective (respectively, injective) homomorphism $\gamma: X\to Y$ will induce another surjective (respectively, injective) homomorphism $\gamma\otimes G:X\otimes G\to Y\otimes G$.

Besides, if $F=\bigoplus_{i\in I} \Z {\bf e}_i$ is a free group of rank $\vert I\vert$, then we can identify $H$ with $F\otimes  G$.

Supposing $\phi:H\to H$ is an endomorphism, we claim that there is a homomorphism $\gamma:F\to F$ such that $\phi=\gamma\otimes G$: in fact, if $i,j\in I$, let $\kappa_i:G_i\to H$ be the natural injection and let $\pi_j:H\to G_j$ be the natural surjection. It follows, for all $i,j$, that $\pi_j\circ \phi\circ \kappa_i$ will be multiplication by some $\mu_{i,j}\in \Z$. If $0\ne a\in G$, then
$$
              \phi({\bf e}_i\otimes a)=\sum_{j\in I}\mu_{i,j} {\bf e}_j\otimes a\in H.
$$
It thus follows that, for all $i$, $\mu_{i,j}=0$ for all but finitely many $j$. Therefore, setting
$$
                   \gamma({\bf e}_i):=\sum_{j\in I}\mu_{i,j} {\bf e}_j\in F,
$$
we perceive that $\gamma:F\to F$ and $\phi=\gamma\otimes G$.

Turning to (B), certainly, if $H$ is co-finitely injective, then it must be Hopfian, and it is also a routine observation that, if $H$ is Hopfian, then $I$ must be finite.

Let $\phi:H\to H$ and $\gamma:F\to F$ be such that $\phi=\gamma\otimes G$; so, again by the above discussion of exactness, we can conclude that $\phi$ is injective or surjective if, and only if, the same holds for $\gamma$. To finish off the proof of (B), therefore, we need to prove that, if $I$ is finite and $\gamma$ is not injective, then $H/\phi(H)$ is infinite. Observe that when $I$ is finite, if $\gamma:F\to F$ is not injective, then it is not surjective too. Consequently, to complete the proof of (B), as well as to prove (C), we need to show that, if $\gamma$ is not surjective, then $H/\phi(H)$ is infinite.

So, to that target, assume $\phi$ and $\gamma$ fail to be surjective. Again by exactness, we can identify $\gamma(F)\otimes G$ with $\phi(H)$, and $H/\phi(H)$ with $(F/\gamma(F))\otimes G$.

Since $H/\phi(H)\ne \overline{\{0\}}$, it follows that $F/\gamma(F)\ne \overline{\{0\}}$. Let $\overline{0}\ne b\in F/\gamma(F)$ and consider $B=\langle b\rangle$. Since $B\leq F/\gamma(F)$, we can view $B\otimes G$ as a subgroup of $H/\phi(H)$.

Note that, if $B\cong \Z$, then $B\otimes G\cong G$ is infinite. On the other hand, if $B\cong \Z(n)$ for some $1<n\in \N$, then $B\otimes G\cong G/nG$, which in the proof of Example~\ref{infin} we saw must be infinite. In either case, since $B\otimes G$ can be identified with a subgroup of $H/\phi(H)$, we can conclude that $H/\phi(H)$ is infinite, which is what we needed to show.
\end{proof}

So, again, if $G$ is from Example~\ref{infin}, then $H=G^{(\omega)}$ will be co-finitely surjective, but {\it not} co-finitely injective, whereas if $n<\omega$, then $H=G^{(n)}$ will be another example of a reduced co-finitely Hopfian group.

\medskip  

The group in the next statement was constructed in \cite{AR} to provide an example of a $B_2$-group of countable rank that failed to be a pure subgroup of a completely decomposable torsion-free group. Thus, we arrive at the following.

\begin{ex}\label{7.0}
There is a countable reduced torsion-free $B_2$-group $R$ that has finite injective rank (and hence is co-finitely injective), but is {\it not} co-finitely surjective (and hence is not co-finitely Hopfian). In fact, one can be found that fails to be a pure subgroup of a completely decomposable group.
\end{ex}

It is worth mentioning that in \cite{DK1} this construction was analyzed and, in particular, in  \cite[Proposition~2.14]{DK1} it was verified that $R$ has finite injective rank. In that computation, it was shown, for instance, that if $\phi$ is the monomorphism given by multiplication by $2$, then the factor-group $R/\phi(R)\cong \Z(2)$ is finite, but $\phi$ is {\it not} surjective. Therefore, $R$ is {\it not} co-finitely surjective, as promised.

\section{Almost co-finitely Hopfian groups}\label{almost}

We start our work here with a slight generalization of the notion of co-finitely Hopfian groups defined above.

\begin{defn}\label{almcof} The group $G$ is {\it almost co-finitely Hopfian} if, whenever $\phi:G\to G$ is an endomorphism with finite co-kernel $G/\phi(G)$, then $\phi$ also has finite kernel.
\end{defn}

It is pretty clear that any co-finitely injective group is almost co-finitely Hopfian, but any non-zero finite group is co-finitely Hopfian, but not co-finitely injective.

\medskip

The following statement gives us some helpful preliminary information about strongly finitely Hopfian torsion groups.

\begin{prop}\label{torsion}
Suppose $T=\bigoplus_p T_p$ is a torsion group. Then, $T$ is almost co-finitely Hopfian if and only if each $T_p$ has that property and all but finitely many $T_p$ are Hopfian.
\end{prop}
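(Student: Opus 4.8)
The plan is to exploit the fact that each primary component $T_p$ is fully invariant in $T$, so that every endomorphism $\phi:T\to T$ splits as a coproduct $\phi=\bigoplus_p \phi_p$ with $\phi_p:T_p\to T_p$. Consequently both the cokernel and the kernel decompose primary-componentwise, namely $T/\phi(T)\cong\bigoplus_p \left(T_p/\phi_p(T_p)\right)$ and $\ker\phi=\bigoplus_p \ker\phi_p$. The key elementary observation I would record first is that a direct sum of abelian groups is finite exactly when every summand is finite and all but finitely many summands vanish. Thus $T/\phi(T)$ is finite if and only if each $\phi_p$ has finite cokernel and $\phi_p$ is surjective for all but finitely many $p$; likewise $\ker\phi$ is finite if and only if each $\ker\phi_p$ is finite and $\phi_p$ is injective for all but finitely many $p$.

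For necessity, I would first check that the almost co-finitely Hopfian property passes to direct summands. This is not among the properties listed in Lemma~\ref{0.5}, but it is immediate: if $G=A\oplus B$ is almost co-finitely Hopfian and $\psi:A\to A$ has finite cokernel, then $\phi:=\psi\oplus \mathrm{id}_B$ has $G/\phi(G)\cong A/\psi(A)$ finite, hence finite kernel $\ker\phi=\ker\psi$, so $A$ inherits the property. Since each $T_p$ is a summand of $T$, this shows every $T_p$ is almost co-finitely Hopfian. For the cofiniteness of the Hopfian components I would argue by contraposition: assuming infinitely many $T_p$ fail to be Hopfian, choose for each such prime a surjective but non-injective endomorphism $\phi_p$ (so that $\ker\phi_p\neq\{0\}$) and set $\phi_q=\mathrm{id}$ on the remaining components. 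Then every component is onto, so $T/\phi(T)=\{0\}$ is finite, whereas $\ker\phi=\bigoplus_p \ker\phi_p$ is an infinite direct sum of non-trivial groups and hence infinite; thus $T$ is not almost co-finitely Hopfian, which is the desired contrapositive.

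For sufficiency, let $\phi=\bigoplus_p\phi_p$ have finite cokernel, and partition the primes. Let $S$ be the finite set of primes where $\phi_p$ fails to be surjective, and let $F$ be the finite set where $T_p$ fails to be Hopfian. For $p\notin S\cup F$, the map $\phi_p$ is a surjective endomorphism of a Hopfian group, hence an automorphism, so $\ker\phi_p=\{0\}$. For $p\in S$, the cokernel $T_p/\phi_p(T_p)$ is finite and $T_p$ is almost co-finitely Hopfian, so $\ker\phi_p$ is finite; the same conclusion holds for $p\in F\setminus S$, where $\phi_p$ is surjective (cokernel $\{0\}$) and $T_p$ is again almost co-finitely Hopfian. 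Hence $\ker\phi=\bigoplus_{p\in S\cup F}\ker\phi_p$ is a finite sum of finite groups, so $\ker\phi$ is finite, proving that $T$ is almost co-finitely Hopfian.

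I do not expect a serious obstacle here; the only points requiring a moment's care are the summand-inheritance of the almost co-finitely Hopfian property, since it is a new condition not contained in Lemma~\ref{0.5}, together with the uniform bookkeeping that a componentwise map has finite kernel (respectively, cokernel) precisely when all but finitely many components are injective (respectively, surjective) and each individual kernel (respectively, cokernel) is finite.
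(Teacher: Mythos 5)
Your proof is correct and follows essentially the same route as the paper: both exploit the full invariance of the primary components to decompose $\phi$, kernel, and cokernel componentwise, prove necessity via summand inheritance plus splicing surjective non-injective maps on infinitely many non-Hopfian components, and prove sufficiency by using Hopficity to force $\ker\phi_p=\{0\}$ for almost all $p$ and almost co-finite Hopficity to bound the finitely many remaining kernels. Your explicit verification of summand inheritance (which the paper dismisses as ``easily checked'') and the clean $S\cup F$ bookkeeping are welcome refinements, but they do not change the argument.
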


\begin{proof}
Considering sufficiency, suppose $\phi:T\to T$ is an endomorphism with a finite co-kernel. We, respectively, denote the kernel and co-kernel of $\phi$ by $K$ and $C$. Since $C\cong \bigoplus_p C_{T_p}$ is finite for almost all $p$, we can derive $C_{T_p}=0$, i.e., $\phi_{T_p}$ is surjective. But since all but finitely many of the $T_p$ are Hopfian, we may deduce that $K_{T_p}=\{0\}$ for all but finitely many $p$. And since each $C_{T_p}$ must be finite, we detect that all of the terms in $K\cong \bigoplus_p K_{T_p}$ are finite and all but finitely many are $\{0\}$; that is, $K$ must be finite, so that $T$ is almost co-finitely Hopfian, as wanted.

Regarding necessity, since it is easily checked that a summand of an almost co-finitely Hopfian group retains that property, we need only show that all but finitely many $T_p$ are actually Hopfian. To that goal, we just suppose the contrary that an infinite number of these are not such, and show that $T$ is not almost co-finitely Hopfian. In fact, as already observed, reducing to a summand, we may assume that, for every prime $p$, there is a surjective homomorphism $T_p\to T_p$ that is not injective. Hence, splicing these together to get an endomorphism $\phi:T\to T$ that is onto, so that $C=\{0\}$ but such that $K\cong \bigoplus_p K_{T_p}$ is infinite, we are done, as desired.
\end{proof}

The following assertion unambiguously discovers that the torsion-free almost co-finitely Hopfian groups are {\it not} new to us.

\begin{prop}\label{TF}
A torsion-free group $G$ is almost co-finitely Hopfian if and only if it is co-finitely injective.
\end{prop}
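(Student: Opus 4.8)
The plan is to dispatch the two implications separately, with essentially all of the content residing in a single elementary observation about torsion-free groups.

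For the (trivial) reverse implication, I would note that if $G$ is co-finitely injective and $\phi:G\to G$ is any endomorphism with finite cokernel $G/\phi(G)$, then $\ker\phi=\{0\}$ directly from the definition of co-finite injectivity, and $\{0\}$ is certainly finite; hence $G$ is almost co-finitely Hopfian. This uses neither torsion-freeness nor anything beyond the two definitions, and is already remarked upon immediately before the statement.

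For the forward implication, suppose $G$ is torsion-free and almost co-finitely Hopfian, and let $\phi:G\to G$ be an arbitrary endomorphism with $G/\phi(G)$ finite. The hypothesis of almost co-finite Hopficity gives at once that $\ker\phi$ is \emph{finite}. The decisive point is then that $\ker\phi$ is a subgroup of the torsion-free group $G$: since every non-zero element of $G$ has infinite order, $G$ contains no non-trivial finite subgroup, and therefore $\ker\phi=\{0\}$. Thus $\phi$ is injective; as $\phi$ was an arbitrary endomorphism with finite cokernel, $G$ is co-finitely injective.

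I do not anticipate any genuine obstacle. The whole argument turns on the elementary fact that a finite subgroup of a torsion-free group is trivial, the remainder being a routine unwinding of the definitions. The only bookkeeping to keep straight is the logical flow: almost co-finite Hopficity controls the \emph{size} of the kernel, rendering it finite, and torsion-freeness then upgrades ``finite'' to ``trivial''.
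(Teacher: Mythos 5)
Your proof is correct and matches the paper's argument exactly: the reverse direction is immediate from the definitions, and the forward direction uses the same key observation that a finite subgroup of a torsion-free group must be trivial, so the finite kernel guaranteed by almost co-finite Hopficity is in fact $\{0\}$.
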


\begin{proof} Sufficiency being quite elementary, suppose $G$ is almost co-finitely Hopfian. Let $\phi:G\to G$ be an endomorphism with finite co-kernel; thus, we need to show that it is injective. However, since $G$ is almost co-finitely Hopfian, its kernel must be a finite subgroups of $G$, and since $G$ is torsion-free, all of its finite subgroups are exactly $\{0\}$, as required. 
\end{proof}

\begin{cor}\label{torsion-free}
A torsion-free group of finite rank will always be almost co-finitely Hopfian.
\end{cor}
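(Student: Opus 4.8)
The plan is to obtain this as an immediate consequence of Proposition~\ref{1} and Proposition~\ref{TF}, both of which have already been established above. First I would invoke Proposition~\ref{1}, which tells us that any torsion-free group of finite rank is co-finitely injective. Then I would feed this into Proposition~\ref{TF}, which asserts that for torsion-free groups the property of being co-finitely injective and the property of being almost co-finitely Hopfian coincide. Composing these two implications delivers the result with no further work.

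Concretely, the argument is a single chain: let $G$ be torsion-free of finite rank; Proposition~\ref{1} yields that $G$ is co-finitely injective; and because $G$ is in particular torsion-free, Proposition~\ref{TF} immediately promotes this to the conclusion that $G$ is almost co-finitely Hopfian. No endomorphism needs to be analyzed directly here, since the relevant kernel and cokernel considerations were already dispatched in proving the two cited propositions.

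I do not anticipate any genuine obstacle, as all the substantive content sits in the two referenced results. The finite-rank hypothesis enters only through Proposition~\ref{1} --- it is what forces an endomorphism with finite-index image to preserve rank and hence to have trivial kernel --- while the passage from co-finite injectivity to the almost co-finitely Hopfian property in Proposition~\ref{TF} rests merely on the observation that a torsion-free group has no nonzero finite subgroups. The only point meriting a moment's care is to confirm that the hypotheses of both propositions are satisfied, which they plainly are, so the corollary is in effect a one-line deduction.
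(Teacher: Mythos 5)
Your proposal is correct and is exactly the paper's intended derivation: the corollary is stated without proof immediately after Proposition~\ref{TF} precisely because it follows by combining Proposition~\ref{1} (finite-rank torsion-free groups are co-finitely injective) with Proposition~\ref{TF} (for torsion-free groups, co-finitely injective is equivalent to almost co-finitely Hopfian). Nothing further is needed.
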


The next reduction claim is rather useful.

\begin{prop}\label{initial}
Suppose $G$ is a group. If both $T$ and $G/T$ are almost co-finitely Hopfian groups, then $G$ is too almost co-finitely Hopfian.
\end{prop}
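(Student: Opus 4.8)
The plan is to exploit the fact that $T$, being the torsion subgroup of $G$, is fully invariant, so that \emph{any} endomorphism $\phi:G\to G$ satisfies $\phi(T)\subseteq T$ and therefore restricts to an endomorphism $\phi_T:T\to T$ and induces an endomorphism $\bar\phi:G/T\to G/T$. Given such a $\phi$ with $G/\phi(G)$ finite, the goal is to prove $\ker\phi$ is finite, and the natural device for transferring finiteness of kernels and co-kernels across the short-exact sequence $0\to T\to G\to G/T\to 0$ is the snake lemma. Applying it to the commutative diagram whose two rows are this sequence and whose vertical maps are $\phi_T,\phi,\bar\phi$ produces the six-term exact sequence
$$0\to \ker\phi_T\to \ker\phi\to \ker\bar\phi\to T/\phi(T)\to G/\phi(G)\to (G/T)/\bar\phi(G/T)\to 0,$$
where we have used that $\phi_T(T)=\phi(T)$, since $\phi(T)\subseteq T$.

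First I would read off finiteness from the co-kernel (right) end. The map $G/\phi(G)\to (G/T)/\bar\phi(G/T)$ is surjective, so $(G/T)/\bar\phi(G/T)$ is a homomorphic image of the finite group $G/\phi(G)$ and hence finite; that is, $\bar\phi$ has finite co-kernel. Since $G/T$ is assumed almost co-finitely Hopfian, this forces $\ker\bar\phi$ to be finite. With $\ker\bar\phi$ now known to be finite, the central portion $\ker\bar\phi\to T/\phi(T)\to G/\phi(G)$ of the exact sequence shows that the map $T/\phi(T)\to G/\phi(G)$ has finite image (it lands in the finite group $G/\phi(G)$) and, by exactness, kernel equal to the image of the finite group $\ker\bar\phi$, hence finite as well. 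Therefore $T/\phi(T)$ is finite, i.e., $\phi_T$ has finite co-kernel. Invoking the hypothesis that $T$ is almost co-finitely Hopfian, we conclude that $\ker\phi_T$ is finite.

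To finish, I would use the kernel (left) end $0\to \ker\phi_T\to \ker\phi\to \ker\bar\phi$: the quotient $\ker\phi/\ker\phi_T$ embeds into $\ker\bar\phi$, so $\ker\phi$ is an extension of a subgroup of the finite group $\ker\bar\phi$ by the finite group $\ker\phi_T$, and is therefore finite. This is precisely what is required for $G$ to be almost co-finitely Hopfian.

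The step I expect to be the main obstacle---indeed the only non-formal point---is establishing that the restriction $\phi_T$ has finite co-kernel, since, unlike the co-kernel of $\bar\phi$, this does \emph{not} drop out immediately as a quotient of $G/\phi(G)$. It becomes available only after one has first secured the finiteness of $\ker\bar\phi$ via the almost co-finitely Hopfian property of $G/T$, so the two hypotheses must be applied in the correct order, with the snake-lemma connecting homomorphism linking them. I would also take care to note that $T$ is intended as the (fully invariant) torsion subgroup: full invariance is exactly what guarantees that $\phi_T$ and $\bar\phi$ exist and that the diagram commutes, and for a general subgroup these induced maps need not be defined at all.
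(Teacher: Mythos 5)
Your proof is correct and takes essentially the same route as the paper's: the same commutative diagram built on $0\to T\to G\to G/T\to 0$ with vertical maps $\phi_T,\phi,\bar\phi$, the same six-term kernel--cokernel exact sequence, and the same order of deductions (finiteness of the co-kernel of $\bar\phi$, then its kernel, then the co-kernel of $\phi_T$, then $\ker\phi_T$, then $\ker\phi$). The only divergence is that the paper invokes Proposition~\ref{TF} (a torsion-free almost co-finitely Hopfian group is co-finitely injective) to conclude $\ker\bar\phi=\{0\}$ outright, whereas you work only with the finiteness of $\ker\bar\phi$ guaranteed by the definition --- which suffices, and as a bonus makes your argument valid for any fully invariant subgroup $T$ with $T$ and $G/T$ almost co-finitely Hopfian, not just the torsion subgroup.
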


\begin{proof} Suppose $\phi:G\to G$ has finite co-kernel; what we just want to prove is that $\phi$ also has finite kernel. To that end, if we set $Q:=G/T$, then $\phi$ determines two endomorphisms $\phi_T:T\to T$ and $\phi_Q: Q\to Q$. Thus, there is a resulting diagram:
$$       \begin{CD}
               0 @>>> T @>>>       G           @>>> Q @>>> 0\\
             @.@VV{\phi_T}V   @VV\phi V        @VV\phi_Q V\\
               0 @>>> T @>>>       G           @>>> Q @>>> 0\\
      \end{CD}
$$
If $\phi_T$, $\phi$ and $\phi_Q$ have kernels $K_T$, $K_G$ and $K_Q$, and co-kernels $C_T$, $C_G$ and $C_Q$, respectively, then there is an exact sequence
$$
   0\to  K_T\to K_G\to    K_Q \to C_T\to C_G\to C_Q\to 0.
$$
As we are assuming that $C_G$ is finite, it follows at once that $C_Q$ is also finite. Next, since $Q$ is almost co-finitely Hopfian, thanks to Proposition~\ref{TF}, it must be co-finitely injective, i.e.,  $K_Q=\{0\}$.

Our sequence then guarantees that $C_T\to C_G$ is injective and, in particular, since $C_G$ is finite, so is $C_T$. Since we are also assuming $T$ is almost co-finitely Hopfian, we can conclude that $K_T$ is finite. But, since by what we have already shown above $K_Q=\{0\}$, our sequence then insures that $K_G\cong K_T$ is finite, as required.
\end{proof}

The next two necessary and sufficient conditions somewhat give a partial characterization of almost co-finitely Hopfian groups in certain situations.

\begin{prop}\label{later}
Suppose $G$ is a group such that $G/T$ is almost co-finitely Hopfian. If $T$ has bounded $p$-torsion (i.e., each $T_p$ is bounded for all primes $p$), then $G$ is almost co-finitely Hopfian if and only if each $T_p$ is finite.
\end{prop}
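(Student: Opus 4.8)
The plan is to treat the two implications separately, leaning on the structural results already in hand. For the direction assuming each $T_p$ is finite, I would first note that a finite group is trivially almost co-finitely Hopfian and Hopfian, since all of its subgroups are finite. Hence every $T_p$ is almost co-finitely Hopfian and Hopfian, so Proposition~\ref{torsion} immediately yields that $T=\bigoplus_p T_p$ is almost co-finitely Hopfian. As $G/T$ is almost co-finitely Hopfian by hypothesis, Proposition~\ref{initial} then gives that $G$ is almost co-finitely Hopfian. Note that the boundedness hypothesis plays no role in this direction, being already subsumed by finiteness.

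For the converse, assume $G$ is almost co-finitely Hopfian and fix a prime $p$; the goal is to show $T_p$ is finite. The key reduction is to realize $T_p$ as a direct summand of $G$. The torsion subgroup $T$ is pure in $G$, and $T_p$ is a summand, hence pure, in $T$; by transitivity of purity, $T_p$ is pure in $G$. Since $T_p$ is bounded, the classical theorem that a bounded pure subgroup is a direct summand (see \cite{F1}) furnishes a decomposition $G=T_p\oplus G'$. Because a summand of an almost co-finitely Hopfian group inherits that property (as observed in the proof of Proposition~\ref{torsion}), we conclude that $T_p$ is itself almost co-finitely Hopfian.

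It then remains to prove that an infinite bounded $p$-group is never almost co-finitely Hopfian, which forces $T_p$ to be finite. Writing $T_p$ as a direct sum of cyclic groups whose orders lie in the finite set $\{p,p^2,\dots,p^N\}$ determined by the bound, a pigeonhole argument produces an infinite subfamily of cyclic summands of one common order $p^k$, giving $T_p\cong \Z(p^k)^{(\omega)}\oplus D$ for some $D$. On the countable copy $\Z(p^k)^{(\omega)}$ with standard generators $c_0,c_1,c_2,\dots$ I would define the endomorphism by $\psi(c_{2i})=c_i$ and $\psi(c_{2i+1})=0$, which is visibly surjective yet has infinite kernel (containing $c_1,c_3,c_5,\dots$); extending by the identity on $D$ produces a surjective endomorphism of $T_p$ with infinite kernel. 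This directly contradicts $T_p$ being almost co-finitely Hopfian, so $T_p$ must be finite.

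The two genuinely load-bearing steps are the splitting of $T_p$ off as a summand and the explicit surjective-with-infinite-kernel endomorphism. I expect the summand step to be the main obstacle, and it is precisely there that boundedness is indispensable: the full torsion subgroup $T$ need not split off $G$, whereas a bounded pure subgroup always does. Once $T_p$ is a summand, the remaining argument is the elementary construction above together with the inheritance of the property by summands.
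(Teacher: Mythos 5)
Your proof is correct and takes essentially the same route as the paper: the sufficiency direction is the identical application of Propositions~\ref{torsion} and \ref{initial}, and the necessity direction rests on the same key step of using boundedness (via pigeonhole on cyclic orders and the bounded-pure-subgroups-split theorem) to produce a summand $\Z(p^k)^{(\omega)}$ and a surjective endomorphism with infinite kernel. The only cosmetic differences are that you split all of $T_p$ off $G$ first and invoke inheritance of the property by summands, whereas the paper splits $\Z(p^n)^{(\N)}$ off $G$ directly and extends the bad endomorphism by the identity on the complement, and that you write the surjection with infinite kernel explicitly where the paper merely asserts its existence.
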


\begin{proof}
Suppose first we act on necessity assuming the contrary that some $T_p$ is infinite. Since $T_p$ is bounded, it is plainly verified that we have a direct decomposition $G\cong H\oplus \Z(p^n)^{(\N)}$. There is, clearly, a surjective homomorphism $$\phi:\Z(p^n)^{(\N)}\to \Z(p^n)^{(\N)}$$ whose kernel is infinite. Extending $\phi$ to $G$ by setting it equal to the identity on $H$, then one inspects that $\phi$ is onto, so that its co-kernel is obviously $\{0\}$. On the other hand, the kernel of $\phi$ is infinite, so that $G$ is not almost co-finitely Hopfian, as asked for.

Conversely, acting on sufficiency, suppose each $T_p$ is finite. Since a finite group is trivially both Hopfian and almost co-finitely Hopfian, we apply Proposition~\ref{torsion} to infer that $T$ is almost co-finitely Hopfian.  Therefore, according to Proposition~\ref{initial}, the group $G$ is so as well, as claimed.
\end{proof}

The next criterion follows immediately from Propositions~\ref{torsion-free}, \ref{later} and the total characterization of Bassian groups given in \cite{CDG} (see \cite{DK} too). Recall that a group is termed {\it Bassian} if it cannot be embedded in a proper homomorphic image of itself.

\begin{cor}\label{Bassian}
Suppose $G$ has finite torsion-free rank and bounded $p$-torsion. Then, $G$ is almost co-finitely Hopfian if and only if it is Bassian.
\end{cor}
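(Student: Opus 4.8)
The plan is to read off both sides of the biconditional as the single condition ``each $T_p$ is finite'', so that the corollary becomes a matter of assembling two already-available equivalences rather than a genuinely new argument. Let $T$ denote the torsion subgroup of $G$, so that $T=\bigoplus_p T_p$.

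First I would note that, since $G$ has finite torsion-free rank, the quotient $G/T$ is torsion-free of finite rank; by Corollary~\ref{torsion-free} it is therefore almost co-finitely Hopfian. This is exactly the standing hypothesis required to invoke Proposition~\ref{later}. Because $T$ has bounded $p$-torsion by assumption, Proposition~\ref{later} then applies verbatim and yields the first equivalence: $G$ is almost co-finitely Hopfian if and only if each $T_p$ is finite.

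For the Bassian side, I would appeal to the total characterization of Bassian groups established in \cite{CDG}, which states that a group is Bassian precisely when it has finite torsion-free rank and each primary component $T_p$ of its torsion part is finite. Since $G$ is assumed to have finite torsion-free rank, this characterization collapses, under our hypotheses, to the second equivalence: $G$ is Bassian if and only if each $T_p$ is finite. Comparing the two equivalences, both reduce to the very same condition, and the corollary follows at once.

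The main point to be careful about, rather than a genuine obstacle, is the bookkeeping of where each hypothesis is consumed. The bounded $p$-torsion assumption is needed only to license Proposition~\ref{later} (outside that hypothesis the characterization of the almost co-finitely Hopfian property could differ), while the finite torsion-free rank assumption feeds both Corollary~\ref{torsion-free}, to make $G/T$ almost co-finitely Hopfian, and the reduction of the \cite{CDG} criterion to ``each $T_p$ is finite''. Once one checks that ``each $T_p$ is finite'' is literally the common condition delivered by Proposition~\ref{later} on the one side and by the Bassian characterization on the other, nothing further remains; this is precisely why the statement is phrased as an immediate corollary.
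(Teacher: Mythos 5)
Your proof is correct and takes essentially the same route as the paper, which offers no separate argument but states the corollary as an immediate consequence of Corollary~\ref{torsion-free}, Proposition~\ref{later}, and the characterization of Bassian groups in \cite{CDG} --- precisely the assembly you carry out, reducing both sides to the common condition that each $T_p$ is finite. Your accounting of where the finite torsion-free rank and bounded $p$-torsion hypotheses are consumed is also accurate.
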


\section{Almost finitely Hopfian groups}

We proceed here by stating the following concept.

\begin{defn}\label{almcof} The group $G$ is {\it almost finitely Hopfian} if, whenever $\phi:G\to G$ is a surjective endomorphism, then $\phi$ has finite kernel.
\end{defn}

Apparently, any almost co-finitely Hopfian and in addition, any Hopfian group, is almost finitely Hopfian.

Note also that this definition is reasonable as the fact that $G/\phi(G)$ is finite does {\it not} imply that ker$\phi$ is finite too. Indeed, whenever $G\cong G\oplus K$ for some infinite $K$, then if $\phi$ is the projection onto the first summand, then the quotient $G/\phi(G)=\overline{\{0\}}$ is finite, whereas $\mathrm{ker}(\phi)=K$ is infinite.

\medskip

The next assertion shows that the converse holds for most commonly encountered groups.

\begin{thm}\label{sep} Suppose $G$ is a group such that, for all primes $p$, $T_p$ is separable. Then, $G$ is almost finitely Hopfian if and only if it is Hopfian.
\end{thm}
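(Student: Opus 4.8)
The plan is to prove the nontrivial implication by contradiction within the almost finitely Hopfian hypothesis. Since an automorphism has trivial, hence finite, kernel, a Hopfian group is automatically almost finitely Hopfian, so only the forward direction requires work. I would therefore assume that $G$ is almost finitely Hopfian, take an arbitrary surjective endomorphism $\phi:G\to G$, note that by hypothesis $K:=\ker\phi$ is finite, and aim to show $K=\{0\}$ (whence $\phi$ is an automorphism and $G$ is Hopfian). Assume, for contradiction, that $K\ne\{0\}$.

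The first step is to localize the obstruction at a single prime. As $K$ is finite it is torsion, so $K=\bigoplus_p K_p$ with $K_p=K\cap T_p$, and $K_p\ne\{0\}$ for some $p$. A direct check shows that $\phi$ carries $T_p$ onto $T_p$ with kernel exactly $K_p$: indeed $T_p$ is fully invariant, and if $t\in T_p$ is written $t=\phi(g)$, then some multiple $p^m g$ lies in the finite group $K$, forcing $g$ to be torsion, and decomposing $g$ into its primary components shows that its $p$-component already maps onto $t$. Hence $\phi$ restricts to a surjection $\phi_p:T_p\to T_p$ with $\ker\phi_p=K_p$, and the first isomorphism theorem yields $T_p\cong T_p/K_p$ with $K_p$ finite and nonzero.

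The second step converts this bare isomorphism into an infinite Ulm--Kaplansky invariant. Since $T_p$ is separable, the finite subgroup $K_p$ lies in a finite direct summand, say $T_p=B\oplus C$ with $B$ finite and $K_p\le B$. Then $T_p/K_p\cong(B/K_p)\oplus C$, so that $B\oplus C\cong(B/K_p)\oplus C$. Comparing the invariants $f_n(X)=\dim_{\mathbb F_p}\big(p^nX[p]/p^{n+1}X[p]\big)$, which are additive over direct sums and equal on isomorphic groups, gives $f_n(B)+f_n(C)=f_n(B/K_p)+f_n(C)$ for every $n$. The finite groups $B$ and $B/K_p$ have different orders, hence their (finite) invariant sequences differ at some index $n_0$; at that index the displayed equality can only hold if $f_{n_0}(C)$, and therefore $f_{n_0}(T_p)$, is infinite.

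Finally I would manufacture a surjection of $G$ with infinite kernel. An infinite $n_0$-th invariant means a basic subgroup of $T_p$ has a direct summand that is a direct sum of infinitely many copies of $\Z(p^{n_0+1})$ (see \cite{F1}); being a summand of a basic subgroup it is pure in $T_p$, and since $T_p$ is pure in $G$, transitivity of purity makes this bounded subgroup pure, hence a direct summand, of $G$. Thus $G\cong S\oplus G'$ with $S\cong\Z(p^{n_0+1})^{(\omega)}$. On $S$, with basis $\{e_i\}_{i<\omega}$, the assignment $e_{2i}\mapsto e_i$, $e_{2i+1}\mapsto 0$ is surjective with infinite kernel; extending by the identity on $G'$ produces a surjective endomorphism of $G$ with infinite kernel, contradicting almost finite Hopficity and forcing $K=\{0\}$. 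The crux, and the expected main obstacle, is precisely this middle passage from $T_p\cong T_p/K_p$ to quantitative data: separability is exactly what permits splitting off a finite summand containing $K_p$, confining the loss of order to a finite factor and pinning the discrepancy onto a single, necessarily infinite, Ulm invariant. Without it (witness $\Z(p^\infty)$) the quotient can be isomorphic to the whole group with no invariant blowing up, and the theorem genuinely fails.
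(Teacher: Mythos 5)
Your proof is correct, and it closes the crucial step by a genuinely different mechanism than the paper, although the two arguments share a skeleton: reduce to a finite kernel $K$, fix a prime with $K_p\ne\{0\}$, use separability to confine $K_p$ inside a well-behaved summand, and exploit the fact that an almost finitely Hopfian group cannot have a summand $\Z(p^k)^{(\N)}$ (such a summand visibly carries a surjection with infinite kernel that extends by the identity). The paper splits off a $p^n$-high subgroup $B_p\supseteq K_p$ of $G$ itself (bounded and pure, hence a summand), invokes the forbidden-summand principle \emph{at that point} to force $B_p$ to be finite, and then compares $G$ with $G'=G/K\cong G$: it checks that $B_p/K_p$ is again $p^n$-high in the torsion part of $G'$, and since $p^n$-high subgroups of isomorphic groups are isomorphic, $B_p\cong B_p/K_p$ with $B_p$ finite forces $K_p=\{0\}$. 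You instead restrict $\phi$ to $T_p$ — your verification that $\phi$ carries $T_p$ \emph{onto} $T_p$ with kernel exactly $K_p$ is correct and is a localization step the paper's global comparison avoids — obtain $T_p\cong T_p/K_p$, split $T_p=B\oplus C$ with $B$ a finite summand containing $K_p$ (the standard separability lemma), and then run an Ulm--Kaplansky cancellation: since finite $p$-groups are determined by their invariants and $|B/K_p|<|B|$, the equality $f_n(B)+f_n(C)=f_n(B/K_p)+f_n(C)$ forces some $f_{n_0}(C)$ to be infinite, whence $G$ acquires a $\Z(p^{n_0+1})^{(\omega)}$ summand via the basic subgroup and transitivity of purity; only here do you invoke the forbidden-summand principle, as the terminal contradiction. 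What each buys: the paper's route avoids invariants entirely but leans on the uniqueness theory of $p^n$-high subgroups and on transporting such a subgroup through the isomorphism $G\cong G/K$; your route keeps everything inside $T_p$, makes the counting explicit (the invariants pinpoint exactly where infinitude must appear), and isolates cleanly why separability is indispensable, as your closing remark about $\Z(p^\infty)$ — almost finitely Hopfian via multiplication by $p$, yet not Hopfian — makes precise. The standard facts you quote (a finite subgroup of a separable $p$-group embeds in a finite summand; finite Ulm--Kaplansky invariants agree with those of a basic subgroup; bounded pure subgroups are summands) are all in Fuchs, so there is no gap.
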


\begin{proof}
Sufficiency again being obvious, suppose $G$ is almost finitely Hopfian. If $\phi:G\to G$ is an epimorphism, then we need to prove its kernel $K$ is precisely $\{0\}$.

Since $G$ is almost finitely Hopfian, we can conclude $K$ is finite, so that $K\leq T$. Therefore, one may decompose $K=\bigoplus_p K_p$, where each $K_p$ is a finite subgroup of $T_p$. If $p$ is an arbitrary prime, it just suffices to establish that $K_p=\{0\}$.

Since $T_p$ is separable, there is an natural $n\in \N$ such that $(p^n G)\cap  K_p=\{0\}$. It follows that $G\cong H_p\oplus B_p$, where $B_p$ is a $p^n$-high subgroup containing $K_p$. If $k\in \N$, it is evident that $\Z(p^k)^{(\N)}$ is not co-finitely injective, so that $G$ cannot have such a summand. Consequently, $B$ too cannot have such a summand, and since $p^n B_p=\{0\}$, it follows that $B_p$ is actually finite, as is $K_p$.

Setting $G':=G/K$, so $G'\cong G$. We also let $$T'=T/K\cong \bigoplus_p (T_p/B_p),$$ which is the torsion subgroup of $G'$. Since $G\cong G'$, we have $T\cong T'$. Note that $T_p=S_p\oplus B_p$, where $S_p=T_p\cap H_p$. It also routinely follows that $$T'_p\cong T_p/K_p\cong S_p\oplus (B_p/K_p)$$ can be identified with the $p$-torsion subgroup of $G'$. And, if we let $B_p'=B_p/K_p$, then it must be that $$p^n B_p'=[p^n B_p+K_p]/K=\overline{\{0\}}.$$ A simple check now gives that $(p^n T')[p]\cong S_p[p]$, so that $B_p'$ is also $p^n$-high in $T'$. Therefore, $B\cong B'$. But since this is finite and $B_p'\cong B_p/K_p$, we can infer that $K=\{0\}$. Finally, $\phi$ is injective, so that $T$ is Hopfian, as asserted.
\end{proof}

\section{Concluding Discussion and Open Problems}

Mimicking \cite{A}, a group is said to be {\it strongly homogeneous} if, for any two pure rank 1 subgroups, there is an automorphism sending one onto the other. The finite rank case of torsion-free strongly homogeneous groups is characterized there. We, however, are interested only in the infinite torsion-free rank situation as in the case of finite torsion-free rank our establishments above completely settle all we need. To be more concrete, we would like to discover some close relationship between strongly homogeneous (torsion-free) groups and the variations of Hopficity as defined above. To that aim, we first observe that it is extremely difficult to decide whether or not (reduced) strongly homogeneous torsion-free groups of infinite rank are co-finitely injective.

\medskip

We end our work with the following two challenging questions of some interest and importance.

\medskip

The first problem is related to Definition~\ref{almcof} and the results established in Subsection~\ref{almost}.

\begin{prob}\label{8} Characterize almost co-finitely Hopfian groups, that are those groups $G$ such that whenever $\phi: G\to G$ is an endomorphism such that $G/\phi(G)$ is finite, then ker$\phi$ is also finite.
\end{prob}

The class of groups in the next second problem generalizes the classical Hopfian groups and is relevant to the obtained above Theorem~\ref{sep}.

\begin{prob}\label{9} Characterize almost finitely Hopfian groups, that are those groups $G$ such that whenever $\phi: G\to G$ is a surjective endomorphism, then ker$\phi$ is finite.
\end{prob}

It is worthwhile to mention that these two logically raised problems are {\it not} unsensible, because they are reciprocal to \cite[Theorem 5.1]{ER}. 

\medskip

We terminate our work with the following problem which, as we have seen above, has an affirmative answer for groups of finite rank, as well as for group which are either completely decomposable or cotorsion.

\begin{prob}\label{easystatement}
It is clear that any co-finitely injective group is torsion-free and Hopfian. Does the converse hold, that is, if a group is torsion-free and Hopfian, must it necessarily be co-finitely injective?
\end{prob}


\vskip2pc

\end{document}